\numberwithin{equation}{section}
\theoremstyle{plain}
\newtheorem{thm}{Theorem}[section]
\newtheorem{lem}[thm]{Lemma}
\newtheorem{prop}[thm]{Proposition}
\newtheorem{conj}[thm]{Conjecture}
\DeclareFontFamily{U}{matha}{\hyphenchar\font45}
\DeclareFontShape{U}{matha}{m}{n}{
      <5> <6> <7> <8> <9> <10> gen * matha
      <10.95> matha10 <12> <14.4> <17.28> <20.74> <24.88> matha12
      }{}
\DeclareSymbolFont{matha}{U}{matha}{m}{n}
\DeclareFontFamily{U}{mathx}{\hyphenchar\font45}
\DeclareFontShape{U}{mathx}{m}{n}{
      <5> <6> <7> <8> <9> <10>
      <10.95> <12> <14.4> <17.28> <20.74> <24.88>
      mathx10
      }{}
\DeclareSymbolFont{mathx}{U}{mathx}{m}{n}
\DeclareMathSymbol{\obot}         {2}{matha}{"6B}
\DeclareMathSymbol{\bigobot}       {1}{mathx}{"CB}
\begin{document}

\title[Theta correspondences]{Coincidence of algebraic and smooth theta correspondences}

\subjclass[2010]{22E46,22E50}
\keywords{Reductive dual pair, Theta correspondence, Casselman-Wallach representation, Harish-Chandra module}

\author{YiXin Bao}

\address{Institute of Mathematics, AMSS, CAS, Beijing, 100190, China}

\email{mabaoyixin1984@amss.ac.cn}

\author{Binyong Sun}

\address{Hua Loo-Keng Key Laboratory of Mathematics, AMSS, CAS $\&$ the University of Chinese Academy of Sciences, Beijing, 100190, China} \email{sun@math.ac.cn}

\thanks{B. Sun was  supported by NSFC Grants 11525105, 11321101, and 11531008.}

\begin{abstract}
An ``automatic continuity" question has naturally occurred since Roger Howe established the local theta correspondence over $\mathbb R$: does the algebraic version of local theta correspondence over $\mathbb R$ agrees with the smooth version? We show that the answer is yes, at least when the concerning dual pair has no quaternionic  type I irreducible factor.
\end{abstract}

\maketitle

\section{Introduction}\label{Introduction}

In his seminal work \cite{Ho2}, Roger Howe established the smooth version of local theta correspondence over $\mathbb R$ as a consequence of its algebraic analogue (see \cite[Theorem 1 and Theorem 2.1]{Ho2}). Since then, it has been expected that the smooth version coincides with the algebraic version. Our main goal is to prove this coincidence,  at least when the concerning dual pair has no quaternionic  type I irreducible factor. The proof is a rather direct application of the conservation relations which are established in \cite{SZ}.

To be precise, let $W$ be a finite-dimensional symplectic space over $\mathbb R$ with symplectic form $\langle\, ,\, \rangle_{W}$. An anti-automorphism of the algebra $\mathrm{End}_{\mathbb{R}}(W)$ whose square is the identity map is called an involution on $\mathrm{End}_{\mathbb{R}}(W)$. Denote by $\tau$ the involution of $\mathrm{End}_{\mathbb{R}}(W)$ specified by
\begin{equation}\label{Symplectic Involution}
  \langle x\cdot u, v \rangle_{W}=\langle u, x^\tau\cdot v \rangle_{W}, \quad u,v\in W, \, x\in \mathrm{End}_{\mathbb{R}}(W).
\end{equation}
The involution satisfying \eqref{Symplectic Involution} is called the adjoint involution of  $\langle\, ,\, \rangle_{W}$.  A more  general notion of ``adjoint involution" will be explained in Section \ref{cartaninvo}. Let $(A,A')$ be a pair of $\tau$-stable semisimple real subalgebras of $\mathrm{End}_{\mathbb{R}}(W)$ which are mutual centralizers of each other in $\mathrm{End}_{\mathbb{R}}(W)$. Put
\[
  G:=A\bigcap \mathrm{Sp}(W)\quad\textrm{and}\quad G':=A' \bigcap \mathrm{Sp}(W),
\]
which are closed subgroups of the symplectic group $\mathrm{Sp}(W)$. Following Howe, we call the group pair $(G, G')$ so obtained a reductive dual pair, or a dual pair for simplicity, in $\mathrm{Sp}(W)$.

Write
\begin{equation}\label{Metaplectic Cover for Symplectic Groups}
  1\rightarrow \{1,\varepsilon_{W}\}\rightarrow \widetilde{\mathrm{Sp}}(W)\rightarrow \mathrm{Sp}(W)\rightarrow 1
\end{equation}
for the metaplectic cover of the symplectic group $\mathrm{Sp}(W)$. It does not split unless $W=0$. Denote by $\mathrm{H}(W):= W\times \mathbb{R}$ the Heisenberg group attached to $W$, with the group multiplication
\begin{equation*}
  (u,\alpha)(v,\beta)=(u+v,\alpha+\beta+\langle u,v\rangle_{W}), \quad u,v\in W, \ \alpha,\beta\in \mathbb{R}.
\end{equation*}
Let $\widetilde{\mathrm{Sp}}(W)$ acts on $\mathrm{H}(W)$ as group automorphisms through the action of $\mathrm{Sp}(W)$ on $W$. Then we form the semi-direct product $\widetilde{\mathrm{J}}(W):=\widetilde{\mathrm{Sp}}(W)\ltimes \mathrm{H}(W)$.

When no confusion is possible, we do not distinguish a representation with its underlying space.
Fix an arbitrary non-trivial unitary  character $\psi$ on $\mathbb{R}$. Up to isomorphism, there is a unique smooth Fr\'{e}chet representation (Segal-Shale-Weil representation) $\omega$ of $\widetilde{\mathrm{J}}(W)$ of moderate growth such that (see \cite{Sh} and \cite{We})
\begin{itemize}
  \item $\omega|_{\mathrm{H}(W)}$ is irreducible and has central character $\psi$;
  \item $\varepsilon_{W}\in \widetilde{\mathrm{Sp}}(W)$ acts through the scalar multiplication by -1.
\end{itemize}
We remark that the second condition is automatic unless $W=0$. The representation $\omega$ may be realized on the space of Schwartz functions on a Lagrangian subspace of $W$ (see \cite{Rao}).

For every closed subgroup $E$ of $\mathrm{Sp}(W)$, write $\widetilde{E}$ for the double cover of $E$ induced by the metaplectic cover \eqref{Metaplectic Cover for Symplectic Groups}. Then $\widetilde{G}$ and $\widetilde{G}^{'}$ commute with each other inside the group $\widetilde{\mathrm{Sp}}(W)$ (see \cite[Lemma II.5]{MVW}). Thus the representation $\omega$ induces  a representation of $\widetilde{G}\times \widetilde{G}'$, which we denote by  $\omega_{G, G'}$.

For every  real reductive group $H$, write  $\mathrm{Irr}(H)$ for the set of isomorphism classes of irreducible Casselman-Wallach representations of $H$. Recall that a representation of a real reductive group is called a Casselman-Wallach representation if it is smooth, Fr\'echet, of moderate growth, and its Harish-Chandra module has finite length.  The reader is referred to \cite{Ca}, \cite[Chapter 11]{Wa} or \cite{BK} for
details about Casselman-Wallach representations.

Define
\begin{eqnarray*}
  \mathscr{R}^{\infty}(\widetilde{G},\omega_{G,G'}) &:=& \{\pi\in \mathrm{Irr}(\widetilde{G})|\mathrm{Hom}_{\widetilde{G}}(\omega_{G,G'},\pi)\neq 0\}, \\
  \mathscr{R}^{\infty}(\widetilde{G'},\omega_{G,G'}) &:=& \{\pi'\in \mathrm{Irr}(\widetilde{G'})|\mathrm{Hom}_{\widetilde{G'}}(\omega_{G,G'},\pi')\neq 0\}, \\
 \mathscr{R}^{\infty}(\widetilde{G}\times \widetilde{G'},\omega_{G,G'}) &:=& \{(\pi,\pi')\in \mathrm{Irr}(\widetilde{G})\times \mathrm{Irr}(\widetilde{G'})|\mathrm{Hom}_{\widetilde{G}\times \widetilde{G'}}(\omega_{G,G'},\pi\widehat \otimes \pi')\neq 0\}.
\end{eqnarray*}
Here ``$\widehat \otimes$" indicates the completed projective tensor product. Then the smooth version of the archimedean theta correspondence asserts that $\mathscr{R}^{\infty}(\widetilde{G}\times \widetilde{G'},\omega_{G,G'})$ is the graph of a bijection (smooth theta correspondence) between $\mathscr{R}^{\infty}(\widetilde{G},\omega_{G,G'})$ and $\mathscr{R}^{\infty}(\widetilde{G'},\omega_{G,G'})$.

Now we go to the algebraic version. We say that an involution $\sigma$ on $\mathrm{End}_{\mathbb{R}}(W)$ is a Cartan involution if there is a positive-definite symmetric bilinear form $\langle\,,\,\rangle_\sigma$ on $W$ such that
\begin{equation}\label{Cartan Involution 1}
  \langle x\cdot u, v \rangle_{\sigma}=\langle u,x^\sigma\cdot v \rangle_{\sigma}, \quad u,v\in W, \, x\in \mathrm{End}_{\mathbb{R}}(W).
\end{equation}
The following Theorem will be proved in Section \ref{Standard Cartan Involutions}.

\begin{thm}\label{involutionp}
Up to conjugation by  $G\times G'$, there exists a unique Cartan involution $\sigma$ on $\mathrm{End}_{\mathbb{R}}(W)$ such that
\begin{equation}\label{Cartan Decomposition 2}
  \sigma\circ \tau=\tau\circ \sigma,\quad  \sigma(A)=A\quad\textrm{and}\quad\sigma(A')=A'.
\end{equation}
\end{thm}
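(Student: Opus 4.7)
The plan is to reformulate the existence of such a $\sigma$ in terms of positive compatible complex structures on $W$ and then handle existence and uniqueness separately.

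First I would translate: a Cartan involution $\sigma$ of $\mathrm{End}_{\mathbb R}(W)$ commuting with $\tau$ is the same datum as a positive compatible complex structure on $(W,\langle\,,\,\rangle_W)$, namely an element $J\in\mathrm{Sp}(W)$ satisfying $J^2=-\mathrm{id}_W$ such that $(u,v)\mapsto \langle u, Jv\rangle_W$ is a positive-definite symmetric bilinear form. The correspondence sends $J$ to $\sigma(x)=J^{-1}\tau(x)J$, and one checks from $J^2=-\mathrm{id}_W$ that $\sigma\circ\tau=\tau\circ\sigma$. Since $\tau(A)=A$, the requirement $\sigma(A)=A$ translates to $JAJ^{-1}=A$; because $A'$ is the centralizer of $A$ in $\mathrm{End}_{\mathbb R}(W)$, this single condition automatically forces $JA'J^{-1}=A'$ as well. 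Thus Theorem~\ref{involutionp} is equivalent to existence and uniqueness up to $G\times G'$-conjugation of such an $A$-normalizing $J$.

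For existence, my strategy would be to decompose $W$ into $(A,A')$-isotypic summands to reduce to the case of an irreducible reductive dual pair, and then invoke the Howe classification of such pairs -- type I (orthogonal--symplectic, unitary--unitary, quaternionic--quaternionic) and type II ($(\mathrm{GL}_m(D),\mathrm{GL}_n(D))$ for $D\in\{\mathbb R,\mathbb C,\mathbb H\}$). For each case I would exhibit a standard compatible complex structure on the standard model of $W$: realize $W$ as a Hermitian module over the appropriate division algebra in which $\tau|_A$ and $\tau|_{A'}$ become the natural conjugate-transpose involutions, and take $J$ to be multiplication by $i$. These are the ``standard Cartan involutions'' advertised in the section title, and they manifestly square to $-\mathrm{id}_W$, lie in $\mathrm{Sp}(W)$, normalize $A$ and $A'$, and produce a positive symmetric form.

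For uniqueness, let $\sigma_1,\sigma_2$ both satisfy the hypotheses. Each restricts on $\mathfrak g:=A^{-\tau}$ (which is the Lie algebra of $G$) to a Cartan involution of this real reductive Lie algebra; likewise on $\mathfrak g'$. By Mostow's conjugacy theorem for Cartan involutions of real reductive groups, there exist $g\in G$ and $g'\in G'$ such that, after conjugating $\sigma_1$ by $(g,g')\in G\times G'$, the result agrees with $\sigma_2$ on $\mathfrak g\cup\mathfrak g'$; here the fact that $G$ and $G'$ mutually centralize each other ensures that conjugating by $g'$ does not disturb the match already achieved on $\mathfrak g$, and symmetrically. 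A standard fact -- that a Cartan involution of a semisimple real algebra with involution is determined by its restriction to the Lie algebra of the unitary group, because the $+1$-eigenspace of $\sigma$ on $\mathfrak g$ (i.e., the maximal compact subalgebra $\mathfrak k$) determines the whole Cartan decomposition of $A$ -- then gives agreement on $A\cup A'$. Finally, the double centralizer theorem yields a surjection $A\otimes_{\mathbb R}A'\twoheadrightarrow \mathrm{End}_{\mathbb R}(W)$, so $A\cup A'$ generates $\mathrm{End}_{\mathbb R}(W)$ as a real algebra, and two algebra anti-involutions agreeing on a generating set must coincide.

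The main obstacle is the existence step: it requires a careful case-by-case construction of $J$ across the types in the Howe classification, with the explicit verification that $\langle u,Jv\rangle_W$ is positive-definite in the presence of the prescribed involutions on $A$ and $A'$. The uniqueness argument, by contrast, is a structural assembly of Mostow's theorem with the double-centralizer property; its only non-trivial intermediate input is the rigidity statement that a Cartan involution of a semisimple real algebra with involution is determined by its restriction to the unitary Lie algebra.
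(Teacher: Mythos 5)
Your translation of the problem into compatible positive complex structures $J$ is correct and is a clean reformulation (the equivalence $\sigma\circ\tau=\tau\circ\sigma \Leftrightarrow J^2\in\mathbb R^\times\mathrm{id}_W$, and $\sigma(A)=A\Leftrightarrow JAJ^{-1}=A$, with $\sigma(A')=A'$ then automatic from the double centralizer property, are all sound). The existence step, deferred to a case-by-case construction through Howe's classification, is essentially what the paper also does by citing explicit models in the literature.

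The uniqueness step, however, has a genuine gap, and it is exactly at the ``rigidity'' claim you flagged as the structural hinge of your argument. You assert that a Cartan involution $\sigma$ of $A$ (commuting with $\tau|_A$) is determined by its restriction to $\mathfrak g=\mathrm{Lie}(G)$, but this is false when $\mathfrak g$ is too small to generate $A$. Concretely, take $A=\mathrm{M}_2(\mathbb R)$ with $\tau|_A(x)=I_{1,1}x^TI_{1,1}$, so $G=\mathrm{O}(1,1)$ and $\mathfrak g_0=\mathfrak{so}(1,1)=\mathbb R K$ with $K=\left(\begin{smallmatrix}0&1\\1&0\end{smallmatrix}\right)$. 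For any $a>|b|$, set $B=aI+bK$ (positive definite symmetric, commuting with $K$) and $\sigma_B(x):=Bx^TB^{-1}$. One checks that each $\sigma_B$ is a Cartan involution of $A$, commutes with $\tau|_A$ (since $B^\tau B$ is scalar), and satisfies $\sigma_B(K)=-K$; so all $\sigma_B$ restrict identically to $\mathfrak g_0$, yet $\sigma_B(I_{1,1})=\frac{(a^2+b^2)I_{1,1}+2abJ}{a^2-b^2}$ with $J=I_{1,1}K$, which depends on $b$. Hence $\sigma|_{\mathfrak g}$ does not pin down $\sigma|_A$, and the bootstrapping from agreement on $\mathfrak g\cup\mathfrak g'$ to agreement on $A\cup A'$ breaks down. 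What you actually need is the conjugacy statement at the level of the algebra $A$ equipped with $\tau|_A$, not merely at the level of $\mathfrak g$ --- this is precisely the content of the paper's Lemma~\ref{Commuting Lemma on Involutions}, which the paper proves by a Cartan decomposition computation in $A^\times$ rather than by appealing to the Lie-algebra-level Mostow theorem.

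There is a second, more repairable issue in the final ``double centralizer'' step: it is not true in general that $A\cup A'$ generates $\mathrm{End}_{\mathbb R}(W)$ as an $\mathbb R$-algebra. For a unitary pair, $A$ and $A'$ generate only $\mathrm{End}_{\mathbb C}(W)\subsetneq\mathrm{End}_{\mathbb R}(W)$, where $\mathbb C$ is the common center. So even granting $\sigma_1|_{A}=\sigma_2|_{A}$ and $\sigma_1|_{A'}=\sigma_2|_{A'}$, you cannot conclude $\sigma_1=\sigma_2$ merely by ``anti-involutions agree on a generating set''. One can salvage this: writing $\sigma_2=\mathrm{Int}(B)\circ\sigma_1$ with $B$ positive and $\sigma_1$-symmetric, agreement on $A$ and $A'$ forces $B$ into the common center $Z=A\cap A'$, and then $\sigma_1$-symmetry together with the fact that $\sigma_1|_Z$ is a Cartan involution of $Z$ (conjugation on its complex factors) forces $B$ into the positive real part of $Z$; what remains is a central scaling that must be absorbed into the $G\times G'$-conjugation (this is the role played by the group $Z_{\mathbb R^\times}$ in the paper's Proposition~\ref{involutionp for type II case}). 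But this repair is a nontrivial argument that your sketch elides, and it is separate from the ``$\mathfrak g$-rigidity'' gap above. The paper avoids both difficulties by working directly with $\sigma|_A$ and $\sigma|_{A'}$ as Cartan involutions of the algebras via Lemma~\ref{Commuting Lemma on Involutions}, and then showing by the explicit tensor formula~\eqref{productf} that the inner product $\langle\,,\,\rangle_\sigma$ on $W=U\otimes_D V$ is reconstructed from the induced inner products on $U$ and $V$, which makes the determination of $\sigma$ by $(\sigma|_A,\sigma|_{A'})$ transparent.
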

The notion of ``conjugation" above will be explained in Section \ref{cartaninvo}.
Fix a Cartan involution $\sigma$ as in Theorem \ref{involutionp}. Then
\[
  C:=\{g\in \mathrm{Sp}(W)\mid g^\sigma g=1\}, \quad K:=G\cap C\quad \textrm{and}\quad  K':=G'\cap C
\]
are maximal compact subgroups of $\mathrm{Sp}(W)$, $G$ and $G'$, respectively. For every $\pi\in \mathrm{Irr}(\widetilde{G})$, define
\[
\mathrm{Hom}_{\widetilde{G}}^{\mathrm{alg}}(\omega_{G,G'},\pi):=\mathrm{Hom}_{\mathfrak g,\widetilde{K}}(\omega_{G,G'}^{\mathrm{alg}},\pi^\mathrm{alg}),
\]
where $\mathfrak g$ denotes the complexified  Lie algebra of $G$, $\omega_{G,G'}^{\mathrm{alg}}$ denotes the $(\mathfrak{sp}(W)\otimes_\mathbb R \mathbb C, \widetilde{C})$-module of $\widetilde{C}$-finite vectors in $\omega=\omega_{G,G'}$, and $\pi^\mathrm{alg}$ denotes the $(\mathfrak g, \widetilde{K})$-module of $\widetilde{K}$-finite vectors in $\pi$. We define $\mathrm{Hom}_{\widetilde{G'}}^{\mathrm{alg}}(\omega_{G,G'},\pi')$ and $\mathrm{Hom}_{\widetilde{G}\times \widetilde{G'}}^{\mathrm{alg}}(\omega_{G,G'},\pi\widehat{\otimes} \pi')$ in an analogous way, where $\pi'\in \mathrm{Irr}(\widetilde{G'})$. The Harish-Chandra module $\omega_{G,G'}^{\mathrm{alg}}$ may be realized as the space of polynomials in $\frac{\dim W}{2}$ variables (see \cite{C}).

In this algebraic setting, we define
\begin{eqnarray*}
  \mathscr{R}^{\mathrm{alg}}(\widetilde{G},\omega_{G,G'}) &:=& \{\pi\in \mathrm{Irr}(\widetilde{G})|\mathrm{Hom}^{\mathrm{alg}}_{\widetilde{G}}(\omega_{G,G'},\pi)\neq 0\}, \\
  \mathscr{R}^{\mathrm{alg}}(\widetilde{G'},\omega_{G,G'}) &:=& \{\pi'\in \mathrm{Irr}(\widetilde{G'})|\mathrm{Hom}^{\mathrm{alg}}_{\widetilde{G'}}(\omega_{G,G'},\pi')\neq 0\}, \\
 \mathscr{R}^{\mathrm{alg}}(\widetilde{G}\times \widetilde{G'},\omega_{G,G'}) &:= & \{(\pi,\pi')\in \mathrm{Irr}(\widetilde{G})\times \mathrm{Irr}(\widetilde{G'})|\mathrm{Hom}^{\mathrm{alg}}_{\widetilde{G}\times \widetilde{G'}}(\omega_{G,G'},\pi\widehat \otimes \pi')\neq 0\}.
\end{eqnarray*}
It is clear that
\begin{equation}\label{inclusion}
\left\{
  \begin{array}{l}
         \mathscr{R}^{\infty}(\widetilde{G},\omega_{G,G'})\subset \mathscr{R}^{\mathrm{alg}}(\widetilde{G},\omega_{G,G'}),\\
    \mathscr{R}^{\infty}(\widetilde{G'},\omega_{G,G'})\subset \mathscr{R}^{\mathrm{alg}}(\widetilde{G'},\omega_{G,G'}),\\
    \mathscr{R}^{\infty}(\widetilde{G}\times \widetilde{G'},\omega_{G,G'})\subset \mathscr{R}^{\mathrm{alg}}(\widetilde{G}\times\widetilde{G'},\omega_{G,G'}).
  \end{array}
\right.
\end{equation}
Then the algebraic version of the archimedean theta correspondence asserts that $\mathscr{R}^{\mathrm{alg}}(\widetilde{G}\times \widetilde{G'},\omega_{G,G'})$ is the graph of a bijection (algebraic theta correspondence) between $\mathscr{R}^{\mathrm{alg}}(\widetilde{G},\omega_{G,G'})$ and $\mathscr{R}^{\mathrm{alg}}(\widetilde{G'},\omega_{G,G'})$. In \cite{Ho2}, Roger Howe  first proves this assertion, and then concludes the smooth version as a consequence of  it.
The following conjecture of the coincidence of the smooth version and the algebraic version of theta correspondence has been expected since the work of Howe.

\begin{conj}\label{Conjecture on Coincidence0}
The inclusions in \eqref{inclusion} are all equalities.
\end{conj}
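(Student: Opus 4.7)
First I would dispose of the trivial containment $\mathscr{R}^{\infty} \subseteq \mathscr{R}^{\mathrm{alg}}$ in \eqref{inclusion}: any continuous $\widetilde{G}$-equivariant map $\omega \to \pi$ is determined by its restriction to the dense subspace $\omega^{\mathrm{alg}}$ of $\widetilde{K}$-finite vectors, so a nonzero smooth intertwining operator restricts to a nonzero $(\mathfrak{g}, \widetilde{K})$-intertwining operator. I would then observe that it is enough to establish the two one-sided equalities $\mathscr{R}^{\infty}(\widetilde{G}, \omega_{G,G'}) = \mathscr{R}^{\mathrm{alg}}(\widetilde{G}, \omega_{G,G'})$ and its analogue for $\widetilde{G'}$: once these are known, the pair-level equality follows at once, because both Howe's algebraic and smooth theta correspondences are bijections on (what is then) the same set, and the easy containment forces the smooth theta lift to agree with the algebraic one, so that their graphs coincide.

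\textbf{Conservation-based attack.} By the standard multiplicativity of theta correspondence over direct sums of dual pairs, I may reduce to the case of an irreducible dual pair, which by hypothesis is either type II or a non-quaternionic type I pair. I then embed the pair into its Witt tower(s), and for $\pi \in \mathrm{Irr}(\widetilde{G})$ I write $m_{\mathrm{alg}}^{\pm}(\pi)$ and $m_{\infty}^{\pm}(\pi)$ for the algebraic and smooth first occurrence indices in the two Witt towers on the other side. The easy direction yields $m_{\infty}^{\pm}(\pi) \geq m_{\mathrm{alg}}^{\pm}(\pi)$, while the conservation relation of Sun--Zhu \cite{SZ} supplies the sharp identity $m_{\mathrm{alg}}^{+}(\pi) + m_{\mathrm{alg}}^{-}(\pi) = c$ with $c$ depending only on the type. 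My goal is then to establish the matching smooth upper bound $m_{\infty}^{+}(\pi) + m_{\infty}^{-}(\pi) \leq c$; combined with the previous inequalities this forces $m_{\infty}^{\pm}(\pi) = m_{\mathrm{alg}}^{\pm}(\pi)$, and persistence of theta lifts up the Witt tower then propagates the equality of smooth and algebraic lifting sets to every level, completing the proof of the one-sided equality.

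\textbf{The crux and the main obstacle.} The critical step is thus to produce a nonzero smooth intertwining map $\omega \to \pi \widehat{\otimes} \pi'$ at the first algebraic occurrence, where $\pi' := \theta^{\mathrm{alg}}(\pi)$. Howe's algebraic theorem together with the sharpness provided by \cite{SZ} ensures that the maximal $\pi^{\mathrm{alg}}$-isotypic quotient of $\omega^{\mathrm{alg}}$ as a $(\mathfrak{g}\times \mathfrak{g}', \widetilde{K}\times\widetilde{K'})$-module is precisely $\pi^{\mathrm{alg}} \otimes \pi'^{\mathrm{alg}}$. Realizing $\omega$ as Schwartz functions $\mathcal{S}(X)$ on a Lagrangian $X$, one forms the closure in $\omega$ of the kernel of this algebraic surjection, which gives a smooth quotient of $\omega$ as a $\widetilde{G}\times \widetilde{G'}$-representation. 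The technical heart---and what I expect to be the main obstacle---is an automatic continuity statement identifying the Harish-Chandra module of this smooth quotient with $\pi^{\mathrm{alg}}\otimes \pi'^{\mathrm{alg}}$; equivalently, the closure of the relevant algebraic submodule must not absorb extra $\widetilde{K}\times\widetilde{K'}$-finite vectors, after which Casselman--Wallach uniqueness will identify the smooth quotient with $\pi\widehat{\otimes}\pi'$. Since $\omega$ has infinite length as a $\widetilde{G}\times\widetilde{G'}$-representation, Casselman--Wallach does not apply directly; this is precisely where the hypothesis excluding quaternionic type I factors enters, both to ensure the full conservation relation of \cite{SZ} is at our disposal and to permit the type-by-type structural analysis (of joint harmonics and associated $K$-type computations) on which the automatic continuity argument must rest.
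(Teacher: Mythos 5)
Your overall skeleton (an easy direction, a conservation-type equality on one side, an inequality on the other, combined to force coincidence of first occurrence indices) is the same as the paper's, but you have the roles of the smooth and algebraic conservation relations reversed, and this mislabeling leads you into a dead end. The conservation relations of \cite{SZ} are established in the \emph{smooth} (Casselman--Wallach) category: what \cite[Theorems 7.1 and 7.6]{SZ} give is the equality $\mathrm{n}(\pi)+\mathrm{n}(\pi\otimes\mathrm{sgn})=\dim U$ (resp.\ $\mathrm{n}_{t_1}(\pi)+\mathrm{n}_{t_2}(\pi)=2\dim U+2$ for unitary pairs) for smooth first occurrence indices, \emph{not} the algebraic identity $m_{\mathrm{alg}}^{+}+m_{\mathrm{alg}}^{-}=c$ you take as a starting point. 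Because of this swap, the ``matching smooth upper bound'' you set yourself as the crux is precisely what \cite{SZ} already supplies, and the automatic continuity program you sketch --- forming the closure of the kernel of the algebraic surjection inside $\omega$, arguing it absorbs no extra $\widetilde K\times\widetilde{K'}$-finite vectors, then invoking Casselman--Wallach --- is exactly the hard route the paper is designed to bypass. The missing ingredient on the side that actually needs an argument is the \emph{algebraic lower bound} $\mathrm{n}'(\pi)+\mathrm{n}'(\pi\otimes\mathrm{sgn})\geq\dim U$, which the paper obtains cheaply from the known algebraic first occurrence of the sign (resp.\ trivial) representation (\cite[Proposition 1.4]{AB1}, \cite[Proposition 2.10]{Pa3}, \cite[Lemma 3.1]{Pa2}) together with the standard doubling/persistence argument; you never introduce this input. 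Once the labels are corrected, the three facts $\mathrm{n}'(\pi)\leq\mathrm{n}(\pi)$, $\mathrm{n}'(\pi)+\mathrm{n}'(\pi\otimes\mathrm{sgn})\geq\dim U$, and $\mathrm{n}(\pi)+\mathrm{n}(\pi\otimes\mathrm{sgn})=\dim U$ sandwich to give $\mathrm{n}=\mathrm{n}'$ with no automatic-continuity machinery at all.

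A second gap: your reduction to irreducible dual pairs leaves the type II case unaddressed. Type II pairs have no Witt tower structure of the kind your argument assumes, and the paper handles them by a different (and easier) mechanism: Godement--Jacquet zeta integrals plus Kudla's persistence principle show that every genuine irreducible representation of the smaller $\mathrm{GL}$ factor already occurs smoothly, so $\mathscr{R}^{\infty}=\mathscr{R}^{\mathrm{alg}}=\mathrm{Irr}^{\mathrm{gen}}(\widetilde G)$ outright. You would need to either give such an argument for type II or explain why the conservation framework applies there; as written, the proposal is silent on this case.
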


Recall that the dual pair $(G,G')$ is said to be irreducible of type I if $A$ (or equivalently $A'$) is a simple algebra, and it is said to be irreducible of type II if $A$ (or equivalently $A'$) is the product of two simple algebras which are exchanged by $\tau$. Every dual pair is a direct product in an essentially unique way of irreducible dual pairs, and a complete classification of irreducible dual pairs has been given by Howe \cite{Ho1} (see Section \ref{Standard Cartan Involutions}).

Let $\mathbb H$ denote the quaternionic division algebra over $\mathbb R$. The following theorem is the main result of this paper.

\begin{thm}\label{Main Theorem}
Conjecture $\ref{Conjecture on Coincidence0}$ holds when the dual pair $(G, G')$ contains no quaternionic type I irreducible factor, that is, when $A$ contains no $\tau$-stable ideal which is isomorphic to a matrix algebra $\mathrm M_n(\mathbb H)$ $($$n\geq 1$$)$.
\end{thm}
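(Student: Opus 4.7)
My plan is to deduce Theorem~\ref{Main Theorem} from the conservation relations of Sun--Zhu~\cite{SZ}, as the introduction suggests. First I would use the direct-product decomposition of the dual pair into irreducible factors: the oscillator representation $\omega_{G,G'}$ decomposes correspondingly (as a completed projective tensor product in the smooth setting and as an ordinary tensor product at the Harish-Chandra module level), and both $\mathscr{R}^\infty(\cdot)$ and $\mathscr{R}^{\mathrm{alg}}(\cdot)$ are multiplicative along this decomposition. It therefore suffices to treat each irreducible factor. The type II case is substantially easier (the correspondence is essentially controlled by $\mathrm{GL}$-theory), so the substance of the theorem lies in the irreducible type I case; under the hypothesis excluding quaternionic type I factors, the remaining cases are the real orthogonal--symplectic pairs and the complex unitary pairs.

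For such a type I pair, I would embed $G$ in a Witt tower of dual partners $\{G'_n\}_n$ and, for $\pi \in \mathrm{Irr}(\widetilde G)$, introduce the first-occurrence indices
\[
 n^{\infty}(\pi) := \min\{n \mid \pi \in \mathscr{R}^{\infty}(\widetilde G, \omega_{G, G'_n})\}, \qquad
 n^{\mathrm{alg}}(\pi) := \min\{n \mid \pi \in \mathscr{R}^{\mathrm{alg}}(\widetilde G, \omega_{G, G'_n})\}.
\]
The inclusion \eqref{inclusion} immediately gives $n^{\mathrm{alg}}(\pi) \le n^{\infty}(\pi)$. Since $G$ sits inside two distinct Witt towers (the ``up'' and the ``down'' tower), we actually obtain four indices $n^{\infty}_\pm(\pi)$ and $n^{\mathrm{alg}}_\pm(\pi)$. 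The conservation relations of \cite{SZ} assert
\[
 n^{\infty}_+(\pi) + n^{\infty}_-(\pi) = c
\]
for an explicit constant $c$ depending only on the type. The analogous identity at the Harish-Chandra module level, $n^{\mathrm{alg}}_+(\pi) + n^{\mathrm{alg}}_-(\pi) = c$, is available either directly from the proof in \cite{SZ}, whose core lower-bound input (vanishing of Jacquet-type quotients of degenerate principal series) transfers to the algebraic setting, or by combining the standard explicit theta-lift upper-bound construction with Howe's algebraic Howe duality. Chaining
\[
 c \;=\; n^{\mathrm{alg}}_+(\pi) + n^{\mathrm{alg}}_-(\pi) \;\le\; n^{\infty}_+(\pi) + n^{\infty}_-(\pi) \;=\; c
\]
forces $n^{\mathrm{alg}}_\pm(\pi) = n^{\infty}_\pm(\pi)$ for every $\pi \in \mathrm{Irr}(\widetilde G)$.

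To upgrade this equality of first occurrences to an equality of the full sets, I would invoke persistence of occurrence in a Witt tower: once $\pi$ enters the correspondence at some level, it occurs at every higher level, a property known in both the smooth and the algebraic settings. Combined with the previous paragraph this yields $\mathscr{R}^{\infty}(\widetilde G, \omega_{G, G'}) = \mathscr{R}^{\mathrm{alg}}(\widetilde G, \omega_{G, G'})$, and by symmetry between $G$ and $G'$ the same equality holds with $\widetilde{G'}$ in place of $\widetilde G$. The third equality in \eqref{inclusion} is then essentially automatic: for $\pi \in \mathscr{R}^{\infty}(\widetilde G, \omega_{G, G'})$, Howe's smooth theta correspondence matches $\pi$ with the unique Casselman--Wallach globalization of the algebraic theta lift of $\pi^{\mathrm{alg}}$, so the graphs of the smooth and algebraic bijections coincide. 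The main obstacle I anticipate is verifying the algebraic analogue of the conservation identity in the form used above; this is precisely the point where the restriction to dual pairs without quaternionic type I factors enters, since that is exactly the range in which the required conservation relations of \cite{SZ} have currently been established.
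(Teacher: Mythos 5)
Your overall strategy---play the conservation relations of \cite{SZ} against the monotonicity $n^{\mathrm{alg}}\le n^{\infty}$---is the same as the paper's, and your reduction to irreducible factors and dispatch of the type II case (and, implicitly, the equivalence of the three inclusions once one holds) are fine. But there are two gaps, one of presentation and one substantive.

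First, you assert the \emph{algebraic} conservation identity $n^{\mathrm{alg}}_+(\pi)+n^{\mathrm{alg}}_-(\pi)=c$ and offer only a gesture toward why it should hold; that equality is in fact stronger than what is needed and not available off the shelf. The paper only uses the one-sided estimate $n^{\mathrm{alg}}_+(\pi)+n^{\mathrm{alg}}_-(\pi)\ge c$, which it extracts cheaply from the known algebraic first occurrence of a single distinguished representation (the sign character for orthogonal-symplectic pairs, the trivial character $1_U$ for unitary pairs) together with the doubling/``well-known'' argument; combined with $n^{\mathrm{alg}}_\pm\le n^{\infty}_\pm$ and the smooth equality $n^{\infty}_++n^{\infty}_-=c$ this already forces termwise equality, so you should drop the unproved claim and use only the lower bound.

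Second, and more seriously, your ``up and down tower'' picture with a fixed pair of indices $(+,-)$ does not apply to unitary dual pairs, and this is where your argument actually breaks. For $G=\mathrm{U}(U)$ the dual Hermitian spaces $V_{p,q}$ fall into infinitely many Witt towers indexed by $t=p-q$, and the conservation relation of \cite[Theorem 7.6]{SZ} says only that there exist \emph{two indices $t_1\neq t_2$ depending on $\pi$} with $n_{t_1}(\pi)+n_{t_2}(\pi)=2\dim U+2$. You do not know in advance that the $t$ relevant to your given dual pair $(G,G')$ is one of $t_1,t_2$, so the chaining $c=n^{\mathrm{alg}}_{t_1}+n^{\mathrm{alg}}_{t_2}\le n^{\infty}_{t_1}+n^{\infty}_{t_2}=c$ only controls those two special indices. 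The paper closes this gap with an extra lemma (using the sharper lower bound $n^{\mathrm{alg}}_{t}(\pi)+n^{\mathrm{alg}}_{t'}(\pi)\ge 2\dim U+|t-t'|$ coming from Paul's computation of $n^{\mathrm{alg}}_t(1_U)=2\dim U+|t|$): if $n^{\mathrm{alg}}_{t}(\pi)\le\dim U$ then $t$ must in fact be one of $t_1,t_2$, whence $n_{t}(\pi)=n^{\mathrm{alg}}_{t}(\pi)$; and the hypothesis $\dim V\le\dim U$ (which one may assume by symmetry) guarantees $n^{\mathrm{alg}}_{p-q}(\pi)\le p+q\le\dim U$ for every $\pi$ in the algebraic occurrence set. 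Without some argument of this kind your proof does not reach the unitary case, which is the bulk of Theorem~\ref{Main Theorem}.
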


It is clear that it suffices to prove Theorem \ref{Main Theorem} for irreducible dual pairs. By the classification of irreducible dual pairs, we only need to show that Conjecture $\ref{Conjecture on Coincidence0}$  holds for (real or complex) orthogonal-symplectic dual pairs, unitary dual pairs, and all type II irreducible dual pairs.  These are respectively proved in Sections \ref{orthogonal-symplectic dual pairs}, \ref{seunit} and \ref{sectypeii}.
Section \ref{Standard Cartan Involutions} is devoted to a proof of Theorem \ref{involutionp}.

\section{A proof of Theorem \ref{involutionp}} \label{Standard Cartan Involutions}

\subsection{Cartan involutions}\label{cartaninvo}
To prove Theorem \ref{involutionp}, we explain the notion of Cartan involution on a semisimple algebra over $\mathbb{R}$ and prove several lemmas on Cartan involutions in this section.

Recall that an anti-automorphism of a $\mathbb R$-algebra is called an involution  if its square is the identity map. Here and as usual, all anti-automorphisms of $\mathbb R$-algebras are assumed to be $\mathbb R$-linear. Let $D=\mathbb R, \mathbb C$ or $\mathbb H$, which is a $\mathbb R$-algebra. Let $M$ be a finite-dimensional right $D$-vector space. Given $\epsilon=\pm 1$ and an involution $\iota$ of $D$, an $\iota$-$\epsilon$-Hermitian form on $M$ is a $\mathbb R$-bilinear, non-degenerate map
\begin{equation}\label{hermform}
 \langle\,,\,\rangle:  M\times M\rightarrow D
\end{equation}
such that, for all $m_{1},m_{2}\in M$ and $d\in D$,
\begin{equation*}
  \langle m_{1},m_{2}\cdot d\rangle=\langle m_{1},m_{2}\rangle d, \quad \langle m_{1},m_{2}\rangle=\epsilon\langle m_{2},m_{1}\rangle^{\iota}.
\end{equation*}
The form $\langle\,,\,\rangle$ is called ``$\iota$-Hermitian'' if $\epsilon=1$; and is called ``$\iota$-skew-Hermitian'' if $\epsilon=-1$. A finite-dimensional right  $D$-vector space equipped with an $\iota$-$\epsilon$-Hermitian form is called a right $\iota$-$\epsilon$-Hermitian space.  The notion of $\iota$-$\epsilon$-Hermitian form on a finite-dimensional left $D$-vector space is defined analogously. An involution $\gamma$ on $\mathrm{End}_{D}(M)$ is called the adjoint involution of  the form $\langle\,,\,\rangle$ if
\begin{equation*}
  \langle x\cdot m_{1},m_{2}\rangle=\langle  m_{1},x^{\gamma}\cdot m_{2}\rangle, \quad\textrm{ for all }  m_{1},m_{2}\in M, x\in \mathrm{End}_{D}(M).
\end{equation*}
An $\iota$-$\epsilon$-Hermitian form on $M$ is said to correspond to an involution $\gamma$ on $\mathrm{End}_{D}(M)$ if  $\gamma$ is the adjoint involution of  this form.

We call the $\iota$-$\epsilon$-Hermitian form \eqref{hermform}  an inner product on $M$ if
\begin{itemize}
  \item
 \begin{equation}\label{conjc}
   \iota=\left\{\begin{array}{ll}
                  \textrm{the identity map}, & \textrm{ if $D=\mathbb R$;}\\
                  \textrm{the complex conjugation }, & \textrm{ if $D=\mathbb C$;}\\
                  \textrm{the quaternionic conjugation }, & \textrm{ if $D=\mathbb H$,}
                  \end{array}
                  \right.
\end{equation}
\item
  $\epsilon=1$,  and
\item
$\langle m,m\rangle$ is a positive  real number for all $m\in M\setminus \{0\}$.

\end{itemize}
A Cartan involution $\sigma$ on $\mathrm{End}_{D}(M)$ is defined to be the adjoint involution of  an inner product $\langle\,,\,\rangle_{\sigma}$ on $M$.  Unless otherwise specified, we will use ``$1_\mathbb R$'' and ``$1_\mathbb C$'' to denote the identity map of $\mathbb R$ and $\mathbb C$, respectively; and use ``$\,\overline{\phantom{a} }\,$"  to denote the involution in \eqref{conjc}, namely the identity map on $\mathbb R$, the complex conjugation or the quaternionic conjugation.

Let $A$ be a finite-dimensional semisimple algebra over $\mathbb R$. If $A$ is simple, then by Wedderburn's Theorem (see \cite[Chapter 1, Theorem 1.1]{Ti}), there is an isomorphism
\begin{equation}\label{isoa}
A\cong \mathrm{End}_{D}(M)
\end{equation}
  for some  $M$ as before. We call an involution on $A$ a Cartan involution if it corresponds to a Cartan involution on $\mathrm{End}_{D}(M)$ under the isomorphism \eqref{isoa}. This is independent of the isomorphism.
In general, when $A$ is non-necessarily simple, we say that an involution on $A$ is a Cartan involution if it is a product of Cartan involutions on its simple ideals.

 For any $a\in A^{\times}$, let $\mathrm{Int}(a)$ denote the inner automorphism of $A$ so that
\begin{equation*}
  \mathrm{Int}(a)(x)=axa^{-1}, \quad \forall x\in A.
\end{equation*}
For every involution $\gamma$ on $A$ and  $a\in A^{\times}$, write $a\gamma a^{-1}$ for the involution $\mathrm{Int}(a)\circ \gamma\circ \mathrm{Int}(a^{-1})$, and call it the conjugation of $\gamma$ by $a$.

\begin{lem}\label{Uniquessness of Cartan Involutions}
Up to conjugation by $A^\times$, there exists a unique Cartan involution on $A$.
\end{lem}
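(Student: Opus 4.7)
The plan is to reduce the statement to the case where $A$ is simple, and then convert it into the classical fact that any two positive-definite $\iota$-Hermitian forms on a finite-dimensional $D$-vector space are equivalent under $\mathrm{Aut}_D(M)$.

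First I would decompose $A=\prod_{i=1}^{r} A_i$ into its simple ideals. By the very definition of a Cartan involution on a semisimple $\mathbb R$-algebra given above, any Cartan involution on $A$ has the form $\prod_i \sigma_i$ with $\sigma_i$ a Cartan involution on $A_i$; and conjugation by $A^\times=\prod_i A_i^\times$ acts componentwise on such involutions. So both existence and uniqueness reduce to the case $A$ simple. Fix therefore an identification $A\cong \mathrm{End}_D(M)$ as in \eqref{isoa}, where $D\in\{\mathbb R,\mathbb C,\mathbb H\}$, equipped with its standard involution \eqref{conjc}, and $M$ is a finite-dimensional right $D$-vector space. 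Existence is immediate: pick any $D$-basis of $M$ and declare it orthonormal; the adjoint involution of the resulting inner product is a Cartan involution on $A$.

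For uniqueness, let $\sigma_1,\sigma_2$ be Cartan involutions on $A$ corresponding to inner products $\langle\,,\,\rangle_1$ and $\langle\,,\,\rangle_2$. The classical Gram--Schmidt / simultaneous-diagonalization procedure produces $a\in \mathrm{Aut}_D(M)=A^\times$ with
\[
  \langle u,v\rangle_2=\langle a^{-1}u,\,a^{-1}v\rangle_1,\qquad u,v\in M.
\]
A direct manipulation of the defining relation of the adjoint involution then gives $\sigma_2(x)=a\,\sigma_1(a^{-1}xa)\,a^{-1}$ for all $x\in A$, which is precisely $\sigma_2=a\sigma_1 a^{-1}$ in the sense of the conjugation action introduced just before the lemma. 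The only delicate point is that one must carry out Gram--Schmidt over $D=\mathbb H$ with due care for the non-commutativity of $\mathbb H$ and for the convention that scalars act on the right while $\mathrm{End}_D(M)$ acts on the left; once this is checked case-by-case for $D=\mathbb R,\mathbb C,\mathbb H$, the rest of the argument is formal.
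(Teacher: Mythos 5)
Your proof is correct and follows essentially the same route as the paper: both reduce to the simple case $A\cong\mathrm{End}_D(M)$ and then observe that conjugating the adjoint involution by $a\in A^\times$ corresponds to pulling back the inner product by $a^{-1}$, so uniqueness reduces to the transitivity of $A^\times$ on inner products. The paper simply asserts this transitivity, whereas you justify it by Gram--Schmidt, but the content is the same.
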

\begin{proof}
Without loss of generality, we assume that $A$ is simple and $A=\mathrm{End}_{D}(M)$. Fix an inner product $\langle\,,\,\rangle_\sigma$ on  $M$ and let $\sigma$ be the adjoint involution of  $\langle\,,\,\rangle_\sigma$. Then $a\sigma a^{-1}$ is the adjoint involution of  $\langle\,,\,\rangle_{a\sigma a^{-1}}$, where $a\in A^{\times}$ and $\langle\,,\,\rangle_{a\sigma a^{-1}}$ is the inner product given by
\begin{equation*}
  \langle m_{1},m_{2}\rangle_{a\sigma a^{-1}}:=\langle a^{-1}\cdot m_{1},a^{-1}\cdot m_{2}\rangle_{\sigma}, \quad \forall m_{1},m_{2}\in M.
\end{equation*}
Since all inner products on $M$ are of the form $\langle\,,\,\rangle_{a\sigma a^{-1}}$, we finish the proof.
\end{proof}

The following lemma is  a useful criterion for Cartan involutions.

\begin{lem}\label{Criterion for Cartan Involutions}
Assume that $A$ is simple. Then an involution  $\sigma$ on $A$ is a Cartan  involution if and only if $\mathrm{tr}(xx^{\sigma})$ is a non-negative real number for all $x\in A$. Here ``$\, \mathrm{tr}$" denotes the reduced trace map from $A$ to its center.
\end{lem}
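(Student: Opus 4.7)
My plan is first to reduce to the case $A$ simple, so that $A=\mathrm{End}_D(M)$ with $D\in\{\mathbb R,\mathbb C,\mathbb H\}$ and $M$ a finite-dimensional right $D$-vector space. Every involution $\sigma$ on $A$ is then the adjoint involution of a non-degenerate $\iota$-$\epsilon$-Hermitian form $\langle\cdot,\cdot\rangle$ on $M$ for some involution $\iota$ of $D$ and some $\epsilon=\pm1$; by definition $\sigma$ is Cartan precisely when $\iota$ is the standard involution of \eqref{conjc}, $\epsilon=+1$, and $\langle\cdot,\cdot\rangle$ is (up to a global sign) positive definite.

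For the direction ``Cartan implies positivity,'' I would fix an orthonormal basis $e_1,\dots,e_n$ of $M$ for the inner product $\langle\cdot,\cdot\rangle_\sigma$. In this basis, writing $x=(x_{ij})\in M_n(D)$, the relation $\langle xu,v\rangle_\sigma=\langle u,x^\sigma v\rangle_\sigma$ forces $x^\sigma=(\overline{x_{ji}})$, so the diagonal entries of $xx^\sigma$ are $\sum_k x_{ik}\overline{x_{ik}}=\sum_k|x_{ik}|^2\in\mathbb R_{\ge 0}$. Since the reduced trace equals the sum of the diagonal entries when $D=\mathbb R$ or $\mathbb C$, and twice that sum when $D=\mathbb H$ (via the embedding $M_n(\mathbb H)\otimes_{\mathbb R}\mathbb C\cong M_{2n}(\mathbb C)$), one obtains $\mathrm{tr}(xx^\sigma)\ge 0$.

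For the reverse direction, I would first use that the reduced trace pairing $(x,y)\mapsto\mathrm{tr}(xy)$ is non-degenerate on the semisimple algebra $A$; hence $B(x,y):=\mathrm{tr}(xy^\sigma)$ is non-degenerate, and combined with $B(x,x)\ge 0$ this upgrades to $B(x,x)>0$ for every $x\neq 0$. From here I would rule out non-Cartan possibilities step by step. If $D=\mathbb C$ and $\sigma$ fixes the center $\mathbb C$ pointwise, then the identity $\mathrm{tr}((\sqrt{-1}\,x)(\sqrt{-1}\,x)^\sigma)=-\mathrm{tr}(xx^\sigma)$ contradicts strict positivity, so $\iota$ must be the standard involution. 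If $\epsilon=-1$, an explicit computation on a hyperbolic plane in $M$ (the quaternionic analogue of the $2\times 2$ symplectic calculation, in which $x=\bigl(\begin{smallmatrix}0&1\\1&0\end{smallmatrix}\bigr)$ satisfies $x^\sigma=-x$ and $\mathrm{tr}(xx^\sigma)=-2$) rules this out too. Once $\iota$ is standard and $\epsilon=+1$, a Sylvester-type diagonalization writes $\langle\cdot,\cdot\rangle$ as $\mathrm{diag}(\epsilon_1,\dots,\epsilon_n)$ with $\epsilon_i\in\{\pm1\}$, and the elementary operators $x=E_{ij}$ give $\mathrm{tr}(xx^\sigma)=\epsilon_i\epsilon_j$; positivity forces all $\epsilon_i$ to have the same sign, so the form is definite, hence (after negating if necessary) an inner product, and $\sigma$ is Cartan.

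The main obstacle is the quaternionic case, where involutions on $\mathbb H$ form two conjugacy classes (the standard conjugation and the family of ``reversions'' $q\mapsto u\bar q u^{-1}$ with $u\in\mathrm{Im}(\mathbb H)\setminus\{0\}$), and the dichotomy between Hermitian and skew-Hermitian forms becomes entangled with the choice of $\iota$. The cleanest way around this is to note that any involution on $\mathbb H$ differs from the standard conjugation by an inner automorphism, which lets one absorb the twist into a relabeling of the right $\mathbb H$-module structure on $M$, reducing to the case where $\iota$ is the standard conjugation before the sub-case analysis above is applied.
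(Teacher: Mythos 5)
Your proof is correct and follows essentially the same route as the paper's: both reduce, via Tignol's classification, to the case where $\sigma$ is the adjoint involution of a standard $\iota$-$\epsilon$-Hermitian form on $D^n$, and then verify the criterion form by form. The main difference is that the paper only works out $M_n(\mathbb R)$ explicitly and dismisses the $\mathbb C$ and $\mathbb H$ cases as ``similar,'' whereas you treat all three $D$ uniformly and add a genuinely useful streamlining in the converse direction: instead of checking positive semi-definiteness of $x\mapsto\mathrm{tr}(xx^\sigma)$ against each standard form, you first invoke the non-degeneracy of the reduced trace pairing on a semisimple algebra to upgrade non-negativity to strict positivity, after which a single witness $x$ with $\mathrm{tr}(xx^\sigma)\le 0$ rules out each non-Cartan case. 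Two points you should tighten. First, the upgrade from positive semi-definite to positive definite via non-degeneracy uses Cauchy--Schwarz, which requires $B(x,y)=\mathrm{tr}(xy^\sigma)$ to be symmetric (for $\iota\in\{1_{\mathbb R},\,\overline{\phantom a}\,|_{\mathbb H}\}$ and $1_{\mathbb C}$) or Hermitian (for $\iota=\overline{\phantom a}\,|_{\mathbb C}$); this follows from $\mathrm{tr}(a^\sigma)=\mathrm{tr}(a)$ resp.\ $\mathrm{tr}(a^\sigma)=\overline{\mathrm{tr}(a)}$, but it should be said. Second, your $\epsilon=-1$ exclusion via the $2\times 2$ hyperbolic element $x=\bigl(\begin{smallmatrix}0&1\\1&0\end{smallmatrix}\bigr)$ handles the symplectic form $J_r$, but the quaternionic skew-Hermitian standard form is $\mathbf{i}I_n$ rather than $J_r$, so that witness does not transfer directly; a correct witness there is $x=\mathbf{k}$ (for which $x^\sigma=(\mathbf{i})^{-1}\overline{\mathbf{k}}\,\mathbf{i}=\mathbf{k}$ and $\mathrm{tr}(xx^\sigma)=\mathrm{tr}(-1)<0$). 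With these two repairs the argument is complete.
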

\begin{proof}
For $a\in A^\times$ and $x\in A$,
\begin{eqnarray*}
  \mathrm{tr}(xx^{a\sigma a^{-1}})&=&\mathrm{tr}(xaa^{\sigma}x^{\sigma}(a^{\sigma})^{-1}a^{-1}) \\
   &=&\mathrm{tr}(a^{-1}xaa^{\sigma}x^{\sigma}(a^{\sigma})^{-1}) \\
   &=&\mathrm{tr}((a^{-1}xa)(a^{-1}xa)^{\sigma}).
\end{eqnarray*}
Thus $\sigma$ has the non-negativity property of this lemma if and only if so does $a\sigma a^{-1}$.

 First assume that $(A,M)=(\mathrm{M}_{n}(\mathbb{R}),\mathbb{R}^{n})$. By \cite[Chapter 1, Theorem 4.1]{Ti}, the involution $\sigma$ is the adjoint involution of  a nonsingular symmetric or skew-symmetric form on $M$. Given a non-negative integer $k$, denote by $I_{k}$ the $k\times k$ identity matrix. Up to conjugation, all nonsingular symmetric forms are of the standard form $I_{p,q}=\mathrm{diag}(I_{p},-I_{q})$ such that $p+q=n$, while there is a unique symplectic form if $n$ is even. It is routine to check that the quadratic form $x\mapsto\mathrm{tr}(xx^{\sigma})$ is positive-semidefinite if and only if the corresponding form is the standard form $I_{n,0}$ or $I_{0,n}$, that is, the involution $\sigma$ is a Cartan involution. This proves the lemma in the case when $A\cong \mathrm{M}_{n}(\mathbb{R})$. The proof for the cases of   $A\cong\mathrm{M}_{n}(\mathbb{C})$ and $A\cong \mathrm{M}_{n}(\mathbb{H})$ is similar.
\end{proof}

\begin{lem}\label{Commuting Lemma on Involutions}
Let $\gamma$ be an involution on  $A$. Then up to conjugation by the group $\{a\in A^\times \mid a^\gamma a=1\}$, there exists a unique Cartan involution on $A$ which commutes with $\gamma$.
\end{lem}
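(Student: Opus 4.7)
Plan: I prove existence and uniqueness separately, after reducing to the case where $A$ is simple (the general case follows by treating each simple ideal independently).

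\emph{Existence.} Pick any Cartan involution $\sigma_0$ on $A$ (by Lemma~\ref{Uniquessness of Cartan Involutions}) and set $\sigma_1:=\gamma\sigma_0\gamma$. I first verify $\sigma_1$ is Cartan via Lemma~\ref{Criterion for Cartan Involutions}: for $y=\gamma(x)$ one has $x\cdot x^{\sigma_1}=\gamma(y^{\sigma_0}y)$, so $\mathrm{tr}(x x^{\sigma_1})$ is the image of $\mathrm{tr}(y^{\sigma_0}y)\in\mathbb{R}_{\geq 0}$ under the involution of $Z(A)$ induced by $\gamma$, and since positive reals are fixed by any involution of $Z(A)$, this lies in $\mathbb{R}_{\geq 0}$. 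By Lemma~\ref{Uniquessness of Cartan Involutions} write $\sigma_1=b\sigma_0 b^{-1}$ for some $b\in A^\times$; equivalently $\sigma_1=\mathrm{Int}(c)\circ\sigma_0$ where $c:=bb^{\sigma_0}$ is a $\sigma_0$-positive element. Conjugating the identity $\mathrm{Int}(c)=\sigma_1\sigma_0$ by $\gamma$ and using $\gamma\sigma_1\gamma=\sigma_0$ yields $\mathrm{Int}(\gamma(c)^{-1})=\mathrm{Int}(c)^{-1}$, so $\gamma(c)=wc$ for some central $w$ with $w\gamma(w)=1$. The crucial point is that $\gamma(c)=\gamma(b)^{\sigma_1}\gamma(b)$ has the same non-negative real spectrum as the $\sigma_1$-positive element $\gamma(b)\gamma(b)^{\sigma_1}$; combined with $c$ having positive spectrum, this forces $w\in\mathbb{R}_{>0}$, and $w\gamma(w)=1$ with $w$ real then gives $w=1$. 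Taking the $\sigma_0$-positive square root $p:=c^{1/2}$ by functional calculus, the same positivity reasoning (applied to $p$) yields $\gamma(p)=p$. A direct computation then shows that $\sigma:=\mathrm{Int}(p)\circ\sigma_0$ is a Cartan involution commuting with $\gamma$: $\gamma\sigma\gamma=\mathrm{Int}(\gamma(p)^{-1}c)\circ\sigma_0=\mathrm{Int}(p^{-1}\cdot p^2)\circ\sigma_0=\sigma$.

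\emph{Uniqueness.} Let $\sigma$ and $\sigma'$ be two Cartan involutions commuting with $\gamma$, and write $\sigma'=a\sigma a^{-1}$ via Lemma~\ref{Uniquessness of Cartan Involutions}. Using the left polar decomposition $a=pu$ with respect to $\sigma$ (so $p:=(aa^\sigma)^{1/2}$ is $\sigma$-positive and $u:=p^{-1}a$ is $\sigma$-unitary), one has $aa^\sigma=p^2$, hence $\sigma'=\mathrm{Int}(aa^\sigma)\circ\sigma=\mathrm{Int}(p^2)\circ\sigma=p\sigma p^{-1}$; so we may replace $a$ by $p$. The commutation $\gamma\sigma'\gamma=\sigma'$ then translates (using $\gamma\circ\mathrm{Int}(p^2)=\mathrm{Int}(\gamma(p)^{-2})\circ\gamma$ and $\gamma\sigma\gamma=\sigma$) to $p^2\gamma(p)^2\in Z(A)$. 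Writing $p=\exp X$ with $X^\sigma=X$, the commutation $\sigma\gamma=\gamma\sigma$ implies $\gamma(X)^\sigma=\gamma(X)$, so $\gamma(p)=\exp(\gamma(X))$ is also $\sigma$-positive. The relation $p^2\gamma(p)^2=\lambda\in Z(A)$ gives $\gamma(p)^2=\lambda p^{-2}$, whence $\gamma(p)$ and $p$ are simultaneously diagonalizable (both positive, sharing eigenspaces with $p^2$) and in particular commute. Therefore $p\gamma(p)$ is a product of commuting $\sigma$-positive elements, itself $\sigma$-positive, with $(p\gamma(p))^2=\lambda$, forcing $p\gamma(p)=\sqrt{\lambda}\in\mathbb{R}_{>0}$. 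Setting $p':=\lambda^{-1/4}p$ produces $\gamma(p')p'=1$, i.e.\ $p'\in H:=\{a\in A^\times\mid a^\gamma a=1\}$, with $\sigma'=p'\sigma(p')^{-1}$.

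\emph{Main obstacle.} The delicate step is pinning down $\gamma(c)=c$ in the existence part. A priori the constraint $w\gamma(w)=1$ together with $\sigma_0$-symmetry of $\gamma(c)$ only forces $w=\pm 1$, and the value $w=-1$ would wreck the ``midpoint'' argument since $c^{1/2}$ would fail to be $\gamma$-fixed. The resolution is to exploit the explicit factorization $\gamma(c)=\gamma(b)^{\sigma_1}\gamma(b)$ to extract positivity of the spectrum of $\gamma(c)$, which rules out $w=-1$. An analogous positivity input—that $\gamma(p)$ remains $\sigma$-positive whenever $p$ is, obtained via the exponential parametrization—drives the commutation $[\gamma(p),p]=0$ in the uniqueness step.
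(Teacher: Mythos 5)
Your approach is genuinely different from the paper's. The paper diagonalizes $\gamma$ by choosing an explicit basis in which the Gram matrix $F_\gamma$ is in standard form, writes down the standard Cartan involution $\sigma$ in those coordinates, and then uses the Cartan decomposition $a=\exp(h)\lambda k$ to show that any other commuting Cartan involution is $\exp(h)\sigma\exp(-h)$ with $\exp(h)$ lying in $\{a: a^\gamma a=1\}$. Your proof is coordinate-free: the existence part is a ``midpoint'' argument (conjugate $\sigma_0$ by the positive square root of $c=bb^{\sigma_0}$ where $\sigma_1=\gamma\sigma_0\gamma=\mathrm{Int}(c)\circ\sigma_0$), and the uniqueness part uses the polar decomposition and functional calculus on $\sigma$-positive elements. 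This is essentially the fixed-point argument in the nonpositively curved symmetric space of Cartan involutions, and both steps of your core argument for simple $A$ check out: $\gamma(c)=\gamma(b)^{\sigma_1}\gamma(b)$ has positive real spectrum, which rules out $w=-1$ (and indeed any $w\neq 1$), and your uniqueness computation $p^2\gamma(p)^2\in Z(A)$, followed by the rescaling $p'=\lambda^{-1/4}p$, correctly lands $p'$ in $\{a:a^\gamma a=1\}$.

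There is, however, a genuine gap in your reduction step. You claim ``the general case follows by treating each simple ideal independently,'' but this fails precisely when $\gamma$ permutes two simple ideals of $A$: those ideals are not $\gamma$-stable, so $\gamma$ does not restrict to an involution on either one. This is not a peripheral case — it is exactly what occurs when the lemma is applied to the algebra $A=\mathrm{End}_D(U_1)\oplus\mathrm{End}_D(U_1^*)$ of a type II dual pair, where $\tau$ exchanges the two factors. The paper treats this case separately (and briefly): starting from an arbitrary Cartan involution $\sigma_1$ on $A_1$, it defines $\sigma_2:=\gamma|_{A_1}\circ\sigma_1\circ\gamma|_{A_2}$ on $A_2$, notes $\sigma_1\times\sigma_2$ is the unique extension commuting with $\gamma$, and invokes Lemma~\ref{Criterion for Cartan Involutions} on $A_2$. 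In the exchanged-pair case several of your steps also need care: Lemma~\ref{Criterion for Cartan Involutions} is stated only for simple $A$, so your verification that $\sigma_1=\gamma\sigma_0\gamma$ is Cartan must be applied factorwise; and the argument ``$w\gamma(w)=1$ with $w$ real gives $w=1$'' breaks, since $\gamma$ now swaps the two copies of the real line in $Z(A)=Z(A_1)\times Z(A_2)$, so $w=(w_1,w_1^{-1})$ with $w_1\neq 1$ is possible. (In fact your midpoint argument survives even with $w\neq 1$ because $\mathrm{Int}(w^{-1/2})=\mathrm{id}$, but as written you rely on $w=1$.) The correct reduction is to $\gamma$-minimal ideals, which are either simple or a product of two simple ideals exchanged by $\gamma$; the latter case must be handled explicitly.
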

\begin{proof}
Without loss of generality, assume that $A$ is either simple or   the product of two simple ideals which are exchanged by $\gamma$. We first treat the latter case by assuming that $A=A_1\times A_2$, where $A_1$ and $A_2$ are two simple ideals of $A$ which are exchanged by $\gamma$. Let $\sigma_1$ be a Cartan involution on $A_1$.  Put $\sigma_2:=\gamma |_{A_1}\circ \sigma_1\circ \gamma |_{A_2}$, which is the unique   involution on $A_2$ such that $\sigma_1\times \sigma_2$ commutes with $\gamma$. Using Lemma \ref{Criterion for Cartan Involutions}, one shows that $\sigma_2$ is also a Cartan involution. This shows the existence assertion of this lemma. The uniqueness assertion is a direct consequence of Lemma \ref{Uniquessness of Cartan Involutions}.

Now we treat the case when $A$ is simple. Assume that  $A=\mathrm{End}_{D}(M)$. By \cite[Chapter 1, Theorem 4.1]{Ti}, $\gamma$ is the adjoint involution of  an $\iota$-$\epsilon$-Hermitian form $\langle\,,\,\rangle_{\gamma}$ on $M$, where $\epsilon=\pm 1$ and $(D,\iota)$ is one of the following four pairs:
\[
  (\mathbb R, 1_\mathbb R), \ (\mathbb C,1_\mathbb C),\ (\mathbb C, \overline{\phantom{a} }),\ (\mathbb H, \overline{\phantom{a} }).
\]
Note that $\gamma$ is simultaneously the adjoint involution of the $\iota$-Hermitian form $\langle\,,\,\rangle_{\gamma}$ and the adjoint involution of the $\iota$-skew-Hermitian form $\mathbf i\langle\,,\,\rangle_{\gamma}$ if $(D,\iota,\epsilon)=(\mathbb C, \overline{\phantom{a} },1)$. Here $\mathbf i:=\sqrt{-1}\in \mathbb C\subset \mathbb H$.

We choose a basis $\{e_{i}\}_{1\leq i\leq n}$ of $M$ such that the matrix $F_{\gamma}:=(\langle e_{i},e_{j}\rangle_{\gamma})_{1\leq i,j\leq n}$ is of the following standard form:
\begin{equation*}
       \left\{
             \begin{array}{ll}
               I_{p,q}, & \hbox{if $(D,\iota)=(\mathbb R, 1_\mathbb R), (\mathbb C, \overline{\phantom{a} })$ or $(\mathbb H, \overline{\phantom{a} })$ and $\epsilon=1$;} \\
               I_{n}, & \hbox{if $(D,\iota)=(\mathbb C, 1_\mathbb C)$ and $\epsilon=1$;} \\
               J_{r}, & \hbox{if $(D,\iota)=(\mathbb R, 1_\mathbb R)$ or $(\mathbb C, 1_\mathbb C)$ and $\epsilon=-1$;} \\
               \mathbf{i}I_{n}, & \hbox{if $(D,\iota)=(\mathbb H, \overline{\phantom{a} })$ and $\epsilon=-1$,}
             \end{array}
           \right.
\end{equation*}
where $p+q=n$ in the first case, and $n=2r$ in the third case. Here $I_{p,q}:=\mathrm{diag}(I_{p},-I_{q})$ and $J_{r}:=\mathrm{diag}(\underbrace{J,J,\cdots,J}_{r})$, where $I_{k}$ is the $k\times k$ identity matrix and $J:=\left(
                                      \begin{array}{cc}
                                        0 & 1 \\
                                        -1 & 0 \\
                                      \end{array}
                                    \right)
$.

We identify $M$ with the column vector space $D^{n}$, hence also $A$ with $\mathrm{M}_{n}(D)$, by means of the basis $\{e_{i}\}_{1\leq i\leq n}$. Fix an inner product $\langle\,,\,\rangle_\sigma$ on $M$ such that $\{e_{i}\}_{1\leq i\leq n}$ is an orthonormal basis for $\langle\,,\,\rangle_\sigma$ and let $\sigma$ be the adjoint Cartan involution of  $\langle\,,\,\rangle_\sigma$. We have that
\begin{equation*}
  ((d_{ij})_{1\leq i,j\leq n})^{\sigma}=(\overline{d_{ji}})_{1\leq i,j\leq n}, \quad ((d_{ij})_{1\leq i,j\leq n})^{\gamma}=F_{\gamma}^{-1}(d_{ji}^{\iota})_{1\leq i,j\leq n}F_{\gamma}.
\end{equation*}
It is routine to check that $\sigma$ commutes with $\gamma$. Consequently, for all $a\in A^\times$ with  $a^\gamma a=1$, the Cartan involution $a\sigma a^{-1}$ also commutes with $\gamma$.

By the Cartan decomposition (see \cite[Theorem 6.31]{Kn}), every element $a$ in $A^{\times}\cong \mathrm{GL}_{n}(D)$ is uniquely of the form
\begin{equation}\label{decoma}
\mathrm{exp}(h)\lambda k,
\end{equation}
 where
  \[
k\in \{a\in A^\times \mid a^\sigma a=1\},\qquad h\in\{a\in A|a=a^{\sigma}, \mathrm{tr}(a)=0\},
 \]
 and $\lambda$ is a positive  real scalar matrix.
Here ``$\mathrm{exp}$" denotes  the exponential map, ``$\mathrm{tr}$" denotes the reduced trace. It is easy to check that $(\lambda k)\sigma(\lambda k)^{-1}=\sigma$. It follows from  Lemma \ref{Uniquessness of Cartan Involutions} that all Cartan involutions on $A$ are of the form $\mathrm{exp}(h)\sigma\mathrm{exp}(-h)$. Together with the non-negative property of $\sigma$ as in Lemma \ref{Criterion for Cartan Involutions}, explicit calculation shows that  the Cartan involution $\mathrm{exp}(h)\sigma\mathrm{exp}(-h)$ commutes with $\gamma$ if and only if
\begin{equation}\label{decoma2}
  \mathrm{exp}(2\iota(h))F_{\gamma}\mathrm{exp}(2h)=F_{\gamma},
\end{equation}
where $\iota((h_{ij})_{1\leq i,j\leq n})=(h_{ji}^{\iota})_{1\leq i,j\leq n}$. By the uniqueness of the decomposition \eqref{decoma}, the equality \eqref{decoma2}  holds if and only if $\iota(h)F_{\gamma}+F_{\gamma}h=0$. This implies that $h$ belongs to the Lie algebra of the group $\{a\in A^\times \mid a^\gamma a=1\}$. Thus $\mathrm{exp}(h)$ belongs to this group and we finish the proof of the lemma.

\end{proof}

Let
\[
(W,\langle\,,\,\rangle_{W}):= \bigoplus_{i=1}^{m}(W_{i},\langle\,,\,\rangle_{W_{i}})
\]
be the orthogonal direct sum of a family $\{(W_{i},\langle\,,\,\rangle_{W_{i}})\}_{1\leq i\leq m}$ of finite-dimensional real symplectic spaces. Denote by $\tau$ the adjoint involution of  $\langle\,,\,\rangle_{W}$ and by $\tau_{i}$ the adjoint involution of  $\langle\,,\,\rangle_{W_{i}}$. It is obvious that the restriction of $\tau$ to $\prod_{i=1}^m \mathrm{End}_{\mathbb{R}}(W_{i})$ is the direct product $\prod_{i=1}^m\tau_{i}$. Let $\sigma_i$ be a Cartan involution on $\mathrm{End}_{\mathbb{R}}(W_{i})$ commuting with $\tau_{i}$ ($1\leq i\leq m$). A Cartan involution $\sigma$ on $\mathrm{End}_{\mathbb{R}}(W)$ is said to be compatible with $\prod_{i=1}^m\sigma_{i}$ if  $(W,\langle\,,\,\rangle_{\sigma})$ is the orthogonal direct sum of  $\{W_{i}\}_{1\leq i\leq m}$, and for each $1\leq i\leq m$, the restriction of $\langle\,,\,\rangle_{\sigma}$ to $W_i\times W_i$ is an inner product corresponding to $\sigma_{i}$, where $\langle\,,\,\rangle_{\sigma}$ is an inner product on $W$ corresponding to $\sigma$, which is unique up to positive scalar multiples. We introduce the following lemma to reduce the proof of Theorem \ref{involutionp} to the cases of irreducible dual pairs.

\begin{lem}\label{Decomposition of Cartan Involutions}
There is a unique Cartan involution $\sigma$ on $\mathrm{End}_{\mathbb{R}}(W)$ which commutes with $\tau$ and  is compatible with $\prod_{i=1}^m\sigma_{i}$.
\end{lem}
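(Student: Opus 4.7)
The plan is to build $\sigma$ directly by choosing an inner product on $W$ that is an orthogonal direct sum of suitably rescaled inner products on the $W_i$, and then to recognise the commutation of $\sigma$ with $\tau$ as a condition on a single operator intertwining the symplectic and Euclidean structures on $W$.

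For each $i$, pick an inner product $\langle\cdot,\cdot\rangle_{\sigma_i}$ on $W_i$ corresponding to $\sigma_i$ and define $T_i\in\mathrm{End}_{\mathbb R}(W_i)$ by
\[
\langle u,v\rangle_{W_i}=\langle T_i u,v\rangle_{\sigma_i},\qquad u,v\in W_i.
\]
Comparing the antisymmetry of $\langle\cdot,\cdot\rangle_{W_i}$ with the symmetry of $\langle\cdot,\cdot\rangle_{\sigma_i}$ yields $T_i^{\sigma_i}=-T_i$, while bookkeeping of adjoints gives $T_i x=(x^{\tau_i})^{\sigma_i}T_i$ for every $x\in\mathrm{End}_{\mathbb R}(W_i)$. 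Since $\sigma_i$ and $\tau_i$ commute, $\alpha_i:=\sigma_i\circ\tau_i$ is an involutive algebra automorphism, so iterating the previous relation gives $T_i^2 x=x T_i^2$ for all $x$. Hence $T_i^2$ lies in the centre of $\mathrm{End}_{\mathbb R}(W_i)$, so $T_i^2=c_i I_i$ for some $c_i\in\mathbb R$, necessarily negative since $T_i$ is invertible and $\sigma_i$-skew.

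For existence, rescale each $\langle\cdot,\cdot\rangle_{\sigma_i}$ by a positive factor so as to arrange $c_i=-1$ uniformly, set
\[
\langle\cdot,\cdot\rangle_\sigma:=\bigoplus_{i=1}^m \langle\cdot,\cdot\rangle_{\sigma_i},
\]
and let $\sigma$ be its adjoint involution. By construction $\sigma$ is a Cartan involution compatible with $\prod_i\sigma_i$. Since both $\langle\cdot,\cdot\rangle_W$ and $\langle\cdot,\cdot\rangle_\sigma$ are orthogonal direct sums along the $W_i$, the associated operator $T$ on $W$ is block-diagonal with blocks $T_i$, so $T^2=-\mathrm{id}_W$. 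The identity $(x^\tau)^\sigma=T x T^{-1}$, obtained exactly as for $T_i$, shows $\sigma\circ\tau=\mathrm{Int}(T)$ and hence $(\sigma\circ\tau)^2=\mathrm{Int}(T^2)=\mathrm{id}$; that is, $\sigma$ commutes with $\tau$.

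For uniqueness, any $\sigma'$ meeting the conditions corresponds to an inner product of the form $\bigoplus_i\lambda_i\langle\cdot,\cdot\rangle_{\sigma_i}$ with $\lambda_i>0$, and its intertwining operator has blocks $\lambda_i^{-1}T_i$ with squares $-\lambda_i^{-2}I_i$. The criterion just obtained forces $T'^2$ to be a scalar operator, so all the $\lambda_i$ must coincide, and this common positive rescaling does not change the adjoint involution. Hence $\sigma'=\sigma$. I expect the main obstacle to be the centrality of $T_i^2$: this is the only place where the hypothesis $\sigma_i\circ\tau_i=\tau_i\circ\sigma_i$ is used in an essential way, and once it is established the scalar-matching argument organising everything on $W$ is purely formal.
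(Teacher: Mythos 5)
Your proof is correct and follows essentially the same strategy as the paper's: normalize the inner product on each $W_i$ so that the scaling factors match (the paper does this by choosing a basis of $W_i$ that is simultaneously symplectic for $\langle\,,\,\rangle_{W_i}$ and orthonormal for $\langle\,,\,\rangle_{\sigma_i}$, which amounts exactly to your normalization $T_i^2=-I_i$), and then observe that for a compatible $\sigma$, commutation with $\tau$ forces all the scaling factors $\lambda_i$ to coincide. The one genuine value added is that your intertwining-operator formulation, namely $\sigma\circ\tau=\mathrm{Int}(T)$ so that $\sigma$ commutes with $\tau$ if and only if $T^2$ is central (hence a scalar by simplicity of $\mathrm{End}_{\mathbb R}(W)$), cleanly carries out what the paper dismisses as an unelaborated ``explicit calculation.''
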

\begin{proof}
By the proof of  Lemma \ref{Commuting Lemma on Involutions}, for each $1\leq i\leq m$, there is a basis
\[
\mathcal{B}_{i}=\{e_{k}^{(i)}\}_{1\leq k\leq r_{i}}\cup \{f_{l}^{(i)}\}_{1\leq l\leq r_{i}}
\]
 of $W_{i}$ which is simultaneously a symplectic basis for $\langle\,,\,\rangle_{W_{i}}$ and an orthonormal basis for $\langle\,,\,\rangle_{\sigma_{i}}$, where $\dim W_{i}=2r_{i}$ and $\langle\,,\,\rangle_{\sigma_{i}}$ is an inner product corresponding to $\sigma_{i}$. Hence the union $\mathcal{B}:=\bigcup_{i=1}^{m}\mathcal{B}_{i}$ is a basis of $W$. A Cartan involution $\sigma$ is compatible with $\prod_{i=1}^m\sigma_{i}$ if and only if $(W,\langle\,,\,\rangle_{\sigma})$ is the orthogonal direct sum of $\{(W_{i},\langle\,,\,\rangle_{i})\}_{1\leq i\leq m}$, where $\langle\,,\,\rangle_{i}=a_{i}\langle\,,\,\rangle_{\sigma_{i}}$ ($a_{i}>0$) for each $1\leq i\leq m$. Explicit calculation shows that  $\sigma$ commutes with $\tau$ if and only if all $a_{i}$'s are the same. This shows the existence and the uniqueness of $\sigma$ satisfying the conditions of this lemma.
\end{proof}

\subsection{Irreducible dual pairs}\label{Irreducible dual pairs}

In this section, we recall the classification of irreducible dual pairs. As mentioned in the Introduction,  every irreducible dual pair is either of type I or of  type II.

A family of type I irreducible dual pairs is associated to a pair $(D,\iota)$, which is one of the following four pairs:
\[
  (\mathbb R, 1_\mathbb R), \ (\mathbb C,1_\mathbb C),\ (\mathbb C, \overline{\phantom{a} }),\ (\mathbb H, \overline{\phantom{a} }).
\]
We consider a nonzero right $\iota$-$\epsilon$-Hermitian space $U$ equipped with an $\iota$-$\epsilon$-Hermitian form $\langle\,,\,\rangle_{U}$ and a nonzero left $\iota$-$\epsilon'$-Hermitian space space $V$ equipped with an $\iota$-$\epsilon'$-Hermitian form $\langle\,,\,\rangle_{V}$, where $\epsilon=\pm 1$ and $\epsilon'=-\epsilon$. Then tensor product $W:=U\otimes_{D}V$ is a real symplectic space under the $\mathbb{R}$-bilinear form $\langle\,,\,\rangle_{W}$ such that, for $u_{1},u_{2}\in U$ and $v_{1},v_{2}\in V$,
\begin{equation}\label{Tensor Product of Two Metric Spaces}
  \langle u_{1}\otimes v_{1},u_{2}\otimes v_{2}\rangle_{W}:=\frac{\langle u_{1},u_{2}\rangle_{U}\langle v_{1},v_{2}\rangle_{V}^{\iota}+\overline{\langle u_{1},u_{2}\rangle_{U}\langle v_{1},v_{2}\rangle_{V}^{\iota}}}{2}.
\end{equation}
Hence $(A,A'):=(\mathrm{End}_{D}(U),\mathrm{End}_{D}(V))$ is a pair of $\tau$-stable simple subalgebras of $\mathrm{End}_{\mathbb R}(W)$ which are mutual centralizers of each other in $\mathrm{End}_{\mathbb R}(W)$, where $\tau$ is the adjoint involution of  $\langle\,,\,\rangle_{W}$. Denote by $\mathrm{G}(U)$ the group of all $D$-linear automorphisms of $U$ preserving the form $\langle\,,\,\rangle_{U}$. Define $\mathrm{G}(V)$ analogously. Following \cite{Ho1}, we call $(\mathrm{G}(U),\mathrm{G}(V))=(A\bigcap \mathrm{Sp}(W),A'\bigcap \mathrm{Sp}(W))$ an irreducible dual pair of type I over $(D,\iota,\epsilon)$ as in the Introduction. Sometimes we omit ``$\epsilon$'' and say ``type I irreducible dual pair over $(D,\iota)$''. We list the families of type I irreducible dual pairs in Table \ref{tableg1} and call them real orthogonal-symplectic dual pairs, complex orthogonal-symplectic dual pairs, unitary dual pairs and quaternionic dual pairs, respectively.
\begin{table}[!hbp]
\small
\begin{tabular}{|c|c|}
  \hline
  $(\mathrm{D},\iota)$ &  Dual Pair Family \\
  \hline
   $(\mathbb{R},1_\mathbb R)$  &  $(\mathrm{O}(p,q),\mathrm{Sp}(2r,\mathbb{R}))\subseteq \mathrm{Sp}(2(p+q)r,\mathbb{R})$ \\
  \hline
  $(\mathbb{C},1_\mathbb C)$ & $(\mathrm{O}(p,\mathbb{C}),\mathrm{Sp}(2r,\mathbb{C}))\subseteq \mathrm{Sp}(4pr,\mathbb{R})$ \\
  \hline
  $(\mathbb{C},\overline{\phantom{a} })$ &  $(U(p,q),U(r,s))\subseteq \mathrm{Sp}(2(p+q)(r+s),\mathbb{R})$ \\
  \hline
   $(\mathbb{H},\overline{\phantom{a} })$ &  $(\mathrm{Sp}(p,q),\mathrm{O}^{*}(2r))\subseteq \mathrm{Sp}(4(p+q)r,\mathbb{R})$ \\
  \hline
\end{tabular}
\vspace{0.1cm}
\caption{\small Type I Irreducible Dual Pairs}\label{tableg1}
\end{table}

As described in \cite{Ho2}, a family of type II irreducible dual pairs is associated to a division algebra $D$ over $\mathbb R$. We consider two nonzero right $D$-vector spaces $U_{1}$ and $U_{2}$ of dimension $r$ and $s$, respectively. Put
\[
  U_{1}^{*}:= \mathrm{Hom}_{D}(U_{1},D)\quad \textrm{and} \quad  U_{2}^{*}:= \mathrm{Hom}_{D}(U_{2},D),
  \]
   where the division algebra $D$ is viewed as a right $D$-vector space. Note that $U_{1}^{*}$ and $U_{2}^{*}$ are naturally left $D$-vector spaces. Set
   \begin{equation}\label{decomwxy}
   W=X\oplus Y, \qquad\textrm{where $X=U_{1}\otimes_{D}U_{2}^{*} \ $ and $\ Y=U_{2}\otimes_{D}U_{1}^{*}$. }
   \end{equation}
   There is a symplectic form $\langle\,,\,\rangle_{W}$ on $W$ such that $X$ and $Y$ are two maximal isotropic subspaces, and for all $u_{1}\in U_{1}, u_{2}\in U_{2}, u_{1}^{*}\in U_{1}^{*}$ and $u_{2}^{*}\in U_{2}^{*}$,
\begin{equation*}
  \langle u_{1}\otimes u_{2}^{*}, u_{2}\otimes u_{1}^{*}\rangle_{W}:= \frac{(u_{1}^{*},u_{1})(u_{2}^{*},u_{2})+\overline{(u_{1}^{*},u_{1})(u_{2}^{*},u_{2})}}{2},
\end{equation*}
where $(u_{1}^{*},u_{1})$ and $(u_{2}^{*},u_{2})$ are the canonical pairing with dual spaces. Put
\[
A:= \mathrm{End}_{D}(U_{1})\oplus \mathrm{End}_{D}(U_{1}^{*})\quad \textrm{and} \quad A':= \mathrm{End}_{D}(U_{2})\oplus \mathrm{End}_{D}(U_{2}^{*}).
\]
 Then $(A,A')$ is a pair of $\tau$-stable subalgebras of $\mathrm{End}_{\mathbb{R}}(W)$ which are mutual centralizers of each other, where $\tau$ is the adjoint involution of  $\langle\,,\,\rangle_{W}$.  Moreover,  the simple algebras $\mathrm{End}_{D}(U_{1})$  and $\mathrm{End}_{D}(U_{1}^{*})$ are exchanged by $\tau$, and likewise $\mathrm{End}_{D}(U_{2}^{*})$  and $\mathrm{End}_{D}(U_{2})$ are also exchanged by $\tau$. Denote by $\mathrm{GL}(U_{1})$ the group of all $D$-linear automorphisms of $U_{1}$ and define $\mathrm{GL}(U_{2})$ analogously. The groups $A\bigcap \mathrm{Sp}(W)$ and $A'\bigcap \mathrm{Sp}(W)$ are isomorphic to $\mathrm{GL}(U_{1})$ and $\mathrm{GL}(U_{2})$, respectively. We call $(\mathrm{GL}(U_{1}),\mathrm{GL}(U_{2}))$ an irreducible dual pair of type II as in the Introduction. The reader is referred to \cite[Chapter II]{Ku2} for details.  We list the families of type II irreducible dual pairs in Table \ref{tableg2}.
\begin{table}[!h]
\small
  \begin{tabular}{|c|c|}
  \hline
    $\mathrm{D}$ & Dual Pair Family \\
  \hline
   $\mathbb{R}$ & $(\mathrm{GL}_{r}(\mathbb{R}),\mathrm{GL}_{s}(\mathbb{R}))\subseteq \mathrm{Sp}(2rs,\mathbb{R})$  \\
  \hline
   $\mathbb{C}$  & $(\mathrm{GL}_{r}(\mathbb{C}),\mathrm{GL}_{s}(\mathbb{C}))\subseteq \mathrm{Sp}(4rs,\mathbb{R})$ \\
  \hline
   $\mathbb{H}$  &  $(\mathrm{GL}_{r}(\mathbb{H}),\mathrm{GL}_{s}(\mathbb{H}))\subseteq \mathrm{Sp}(8rs,\mathbb{R})$ \\
  \hline
  \end{tabular}
\vspace{0.1cm}
\caption{\small Type II Irreducible Dual Pairs}\label{tableg2}
\end{table}

\subsection{The proof of Theorem \ref{involutionp}} Let $A, A'\subset \mathrm{End}_{\mathbb{R}}(W)$ and  $G,G'\subset \mathrm{Sp}(W)$ be as in  Theorem \ref{involutionp}. Recall that $\tau$ is the adjoint involution of  the symplectic form $\langle\,,\,\rangle_{W}$. We want to to show that
up to conjugate by  $G\times G'$, there exists a unique Cartan involution $\sigma$ on $\mathrm{End}_{\mathbb{R}}(W)$ such that
\begin{equation}\label{Cartan Decomposition 3}
  \sigma\circ \tau=\tau\circ \sigma,\quad  \sigma(A)=A\quad\textrm{and}\quad\sigma(A')=A'.
\end{equation}

First we assume that $(G,G')$ is irreducible. A Cartan involution satisfying \eqref{Cartan Decomposition 3} can be explicitly constructed (see \cite[Section 1]{AB1} and \cite[Section 1.1]{Pa1}). We only need to prove that $\sigma$ is unique up to conjugation by $G\times G'$.

\begin{prop}\label{involutionp for type I case}
The uniqueness assertion of Proposition $\ref{involutionp}$ holds for  irreducible dual pairs of type I.
\end{prop}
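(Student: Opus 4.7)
The plan is to combine the local uniqueness statement of Lemma~\ref{Commuting Lemma on Involutions}, applied separately to the two simple algebras $A$ and $A'$, with a rigidity argument for the gluing, based on the classical correspondence between Cartan involutions on $\mathrm{End}_\mathbb{R}(W)$ commuting with $\tau$ and positive compatible complex structures on $W$.

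First I would use Lemma~\ref{Commuting Lemma on Involutions} to normalize the restrictions. Given two Cartan involutions $\sigma_1,\sigma_2$ satisfying \eqref{Cartan Decomposition 3}, each $\sigma_j|_A$ is a Cartan involution of $A$ commuting with $\tau|_A$, and since $G = \{a \in A^\times : a^{\tau|_A} a = 1\}$, the lemma produces $g_1 \in G$ whose conjugation takes $\sigma_2|_A$ to $\sigma_1|_A$; because $g_1 \in A$ centralizes $A'$, this does not disturb $\sigma_2|_{A'}$. A symmetric application on the $A'$ side yields $g_2 \in G'$ matching the $A'$-restrictions without affecting the now-matched $A$-restrictions. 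After conjugating $\sigma_2$ by $(g_1,g_2) \in G \times G'$, it suffices to show that $\sigma_1 = \sigma_2$ under the standing assumption $\sigma_1|_A = \sigma_2|_A$ and $\sigma_1|_{A'} = \sigma_2|_{A'}$.

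For the rigidity step I would translate each $\sigma_i$ into a complex structure on $W$. Since $\sigma_i$ commutes with $\tau$, the composition $\tau\sigma_i$ is an automorphism of $\mathrm{End}_\mathbb{R}(W)$, so by Skolem--Noether it is of the form $\mathrm{Int}(J_i)$ for some $J_i \in \mathrm{End}_\mathbb{R}(W)^\times$; the involutivity and Cartan (positivity) properties of $\sigma_i$, together with the relation $\sigma_i \tau = \tau \sigma_i$, force $J_i \in \mathrm{Sp}(W)$, $J_i^2 = -1$, and the symmetric form $(u,v) \mapsto \langle u, J_i v\rangle_W$ positive-definite; the condition $\sigma_i(A)=A$ translates to $J_iAJ_i^{-1}=A$. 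Equality of the restrictions to $A$ and $A'$ then gives $\mathrm{Int}(J_1)|_A = \mathrm{Int}(J_2)|_A$ and likewise on $A'$, so $z := J_1 J_2^{-1}$ centralizes both $A$ and $A'$ and hence lies in $A \cap A' = Z(A) \cap Z(A')$.

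The proof finishes with a case analysis by the type of the pair. In the $\mathbb{R}$-central cases $(D,\iota) \in \{(\mathbb R,1_\mathbb R),(\mathbb H,\overline{\phantom{a}})\}$ one has $Z(A)\cap Z(A') = \mathbb R$, so $z \in \mathbb R$ and the equation $-1 = J_1^2 = z^2 J_2^2 = -z^2$ forces $z = \pm 1$, whence $\sigma_1 = \sigma_2$ since $\mathrm{Int}(\pm J) = \mathrm{Int}(J)$. The main obstacle will be the two complex cases $(D,\iota) \in \{(\mathbb C,1_\mathbb C),(\mathbb C,\overline{\phantom{a}})\}$, where $Z(A)\cap Z(A') = \mathbb C$ and the bare relation $J_1^2 = -1$ only yields $|z|=1$. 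Here the action of $\mathrm{Int}(J_i)$ on the common center $\mathbf i \in \mathbb C$ is pinned down by the fact that a Cartan involution of $A$ acts on $Z(A)$ by complex conjugation, and is therefore the same for $J_1$ and $J_2$; in the unitary case this forces both $J_i$ to be $\mathbb C$-linear, so $z$ commutes with $J_2$ and $z^2 = 1$, while in the complex orthogonal-symplectic case both $J_i$ are $\mathbb C$-antilinear and the symplectic condition $J_i^\tau = -J_i$ combined with $\mathbf i^\tau = \mathbf i$ gives $z^\tau = z$, hence $z \in \mathbb R$; combined with $|z|=1$ we conclude $z = \pm 1$ and $\sigma_1 = \sigma_2$ in every case.
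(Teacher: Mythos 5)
Your argument is correct and arrives at the uniqueness claim by a genuinely different route than the paper, although both proofs share the same skeleton: use Lemma~\ref{Commuting Lemma on Involutions} to match the restrictions $\sigma|_A$ and $\sigma|_{A'}$ (exploiting that $G\subset A$ centralizes $A'$ and vice versa), and then prove that a Cartan involution satisfying \eqref{Cartan Decomposition 3} is determined by these restrictions. For the rigidity step you invoke Skolem--Noether to write $\tau\sigma_i=\mathrm{Int}(J_i)$, normalize $J_i$ to a positive compatible complex structure ($J_i^2=-1$, $J_i\in\mathrm{Sp}(W)$, $\langle\,\cdot\,,J_i\,\cdot\,\rangle_W$ positive), observe that once the restrictions agree the ratio $z=J_1J_2^{-1}$ centralizes both $A$ and $A'$ and hence lies in the common center, and then eliminate $z$ case by case using $J_i^2=-1$, $J_i^\tau=-J_i$, and the $\mathbb{C}$-linearity (unitary case) or $\mathbb{C}$-antilinearity (complex orthogonal-symplectic case) of $J_i$. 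The paper instead works directly with the Hermitian forms: it fixes a unit vector $v_0\in V$ for an inner product $\langle\,,\,\rangle_2$ realizing $\sigma|_{A'}$, produces a norm on $U$ from $\langle\,,\,\rangle_\sigma$, invokes the parallelogram law to recover an inner product $\langle\,,\,\rangle_1$ realizing $\sigma|_A$, and identifies $\langle\,,\,\rangle_\sigma$ with the explicit tensor-product form \eqref{productf}. Your version is more structural and uniform across the four $(D,\iota)$ types; the paper's version is more constructive, and the tensor-product formula \eqref{productf} it extracts is reused in the proof of Proposition~\ref{involutionp for type II case} and again in Sections~\ref{orthogonal-symplectic dual pairs}--\ref{seunit} to set up the algebraic oscillator modules. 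The one small point you pass over is that Skolem--Noether determines $J_i$ only up to a real scalar; one should note that $\mathrm{Int}(J_i^2)=(\tau\sigma_i)^2=\mathrm{id}$ forces $J_i^2$ to be a real scalar and that the positivity of $\sigma_i$ makes it negative, so a rescaling achieves $J_i^2=-1$.
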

\begin{proof}
Given a type I irreducible dual pair $(\mathrm{G}(U),\mathrm{G}(V))$ over $(D,\iota)$, recall  the real symplectic space $W$, the algebra pair $(A,A')$ and the involution $\tau$ from  Section \ref{Irreducible dual pairs}. Suppose $\sigma$ is a Cartan involution on $W=U\otimes_{D}V$ satisfying \eqref{Cartan Decomposition 3} and $\langle\,,\,\rangle_{\sigma}$ is an inner product corresponding to $\sigma$. Note that $(A, A')$ is $\sigma$-stable and $\tau$-stable. Denote by $\sigma_{1}$ and $\sigma_{2}$ the restrictions of $\sigma$ on $A$ and $A'$, respectively. Define $\tau_{1}$ and $\tau_{2}$ analogously. By Lemma \ref{Criterion for Cartan Involutions} and \eqref{Cartan Decomposition 3}, $\sigma_{1}$ is a Cartan involution on $\mathrm{End}_{D}(U)$ and commutes with $\tau_{1}$. Likewise, $\sigma_{2}$ is a Cartan involution on $\mathrm{End}_{D}(V)$, and commutes with $\tau_{2}$, respectively.

Let $\langle\,,\,\rangle_{2}$ be an inner product on $V$ corresponding to $\sigma_{2}$.
Define a norm on $U$ by
 \begin{equation}\label{defabsu}
  \| u \| :=\sqrt{\langle u\otimes v_{0}, u \otimes v_{0}\rangle_{\sigma}},
 \end{equation}
where $v_{0}$ is a vector in $V$ satisfying $\langle v_{0},v_{0}\rangle_{2}=1$. Since the set of unit vectors in $V$ is transitive under the action of the group $\{y\in \mathrm{End}_{D}(V)|yy^{\sigma_{2}}=1\}$, which is a subgroup of $\{a\in \mathrm{End}_{\mathbb{R}}(W)|aa^{\sigma}=1\}$, this norm is independent of the choice of the unit vector $v_0$. Note that
\[
\|u\cdot d\|=\|u\|\cdot|d|\quad\textrm{for all  $d\in D$ and $u\in U$,$\ $ where $|d|:=\sqrt{d\overline{d}}$.}
\]
 Moreover, this norm satisfies parallelogram law (see \cite[Chapter V, Theorem 2]{J}). Hence there is a unique  inner product $\langle\,,\,\rangle_{1}$ on $U$ such that (see \cite[Chapter V, Theorem 4]{J})
  \begin{equation}\label{defabsu2}
   \langle u, u\rangle_{1}=\|u \|^2, \quad \textrm{for all }u\in U.
 \end{equation}
 It is routine to check that $\langle\,,\,\rangle_{1}$ is an inner product corresponding to $\sigma_{1}$. By \eqref{defabsu} and \eqref{defabsu2},
\begin{equation}\label{defabsu3}
    \langle u\otimes v,u\otimes v\rangle_{\sigma}=\langle u,u\rangle_{1} \cdot \langle v,v\rangle_{2},\quad \textrm{for all }u\in U, \, v\in V.
     \end{equation}

 Define a bilinear form on $W$ by
 \begin{equation}\label{productf}
  \langle u_{1}\otimes v_{1},u_{2}\otimes v_{2}\rangle_{\sigma}'=\frac{\langle u_{1},u_{2}\rangle_{1}\overline{\langle v_{1},v_{2}\rangle_{2}}+\langle v_{1},v_{2}\rangle_{2}\overline{\langle u_{1},u_{2}\rangle_{1}}}{2}.
\end{equation}
 One checks that it is  a well-defined non-degenerate symmetric bilinear form on $W$ corresponding to $\sigma$.  Thus this form is a scalar multiple of $\langle\,,\,\rangle_\sigma$. It must equal $\langle\,,\,\rangle_\sigma$ by \eqref{defabsu3}. In conclusion,  the inner product $\langle\,,\,\rangle_{\sigma}$ is determined by the inner products $\langle\,,\,\rangle_{1}$ and $\langle\,,\,\rangle_{2}$, and consequently,  $\sigma$ is determined by $\sigma_{1}$ and $\sigma_{2}$. Therefore this lemma follows by Lemma \ref{Commuting Lemma on Involutions}.
 \end{proof}

Recall from  Section \ref{Irreducible dual pairs} the type II irreducible dual pair $(\mathrm{GL}(U_{1}),\mathrm{GL}(U_{2}))$, the decomposition  $W=X\oplus Y$ of \eqref{decomwxy},  the algebra pair $(A,A')$ and the involution $\tau$.   Let  $\sigma$ be a Cartan involution on $W$ satisfying \eqref{Cartan Decomposition 3} and $\langle\,,\,\rangle_{\sigma}$ an inner product corresponding to $\sigma$. To prove the uniqueness of $\sigma$, we introduce the following lemma.

\begin{lem}\label{Orthogonal Direct Sum Decomposition}
The decomposition $W=X\oplus Y$ is an orthogonal direct sum decomposition of  $\langle\,,\,\rangle_{\sigma}$.
\end{lem}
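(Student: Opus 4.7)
The key idea is to identify the projection onto $X$ along $Y$ with a central idempotent of $A$, and then exploit the Cartan property of $\sigma$. Concretely, let $e_X \in A$ be the central idempotent $(1_{U_1},0) \in \mathrm{End}_D(U_1)\oplus \mathrm{End}_D(U_1^*)$ and $e_Y := 1 - e_X = (0,1_{U_1^*})$. Unwinding the action of $A$ on $W = X \oplus Y$ described in Section \ref{Irreducible dual pairs}, one checks immediately that $e_X$ acts as the identity on $X$ and as zero on $Y$, so $e_X$ is precisely the projection $W \to X$ along $Y$ (and similarly for $e_Y$). Since $A$ is a product of two simple ideals, its only central idempotents are $\{0,\,1,\,e_X,\,e_Y\}$.

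Next, because $\sigma$ is an anti-automorphism of $\mathrm{End}_{\mathbb R}(W)$ preserving $A$, the element $\sigma(e_X)$ is again a central idempotent of $A$, hence lies in $\{0,1,e_X,e_Y\}$. The values $0$ and $1$ are ruled out immediately, since $\sigma$ is an involution and $e_X \neq 0,1$. The plan is then to rule out $\sigma(e_X) = e_Y$ using positive definiteness of the inner product $\langle\,,\,\rangle_\sigma$ corresponding to $\sigma$. Indeed, if $\sigma(e_X) = e_Y$, then for any $v \in Y$ one would have
\begin{equation*}
 \langle v,v\rangle_\sigma = \langle v, e_Y v\rangle_\sigma = \langle v, \sigma(e_X)v\rangle_\sigma = \langle e_X v, v\rangle_\sigma = 0,
\end{equation*}
since $e_X$ vanishes on $Y$. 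As $\langle\,,\,\rangle_\sigma$ is positive definite and $Y \neq 0$, this is a contradiction. Therefore $\sigma(e_X) = e_X$.

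Finally, $e_X$ being a $\sigma$-self-adjoint idempotent means it is the orthogonal projection onto its image with respect to $\langle\,,\,\rangle_\sigma$. Its image is $X$ and its kernel is $Y$, so $X$ and $Y$ are orthogonal under $\langle\,,\,\rangle_\sigma$, which is exactly the claim.

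\textbf{Main obstacle.} The only nontrivial point is the geometric identification of $e_X, e_Y$ as central idempotents of $A$ (which realizes the decomposition $W=X\oplus Y$ internally in $A$); once this is in place, the exclusion of $\sigma(e_X) = e_Y$ is a one-line positive-definiteness argument, and the orthogonality of $X$ and $Y$ is automatic. Note that the commutation $\sigma\circ\tau=\tau\circ\sigma$ is not actually needed for this lemma; only $\sigma(A)=A$ together with the Cartan property of $\sigma$ is used.
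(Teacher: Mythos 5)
Your proof is correct, and it takes a genuinely different route from the paper's. The paper's argument splits the center $Z$ of $A$ into $\sigma$-eigenspaces $Z = Z^+ \oplus Z^-$, observes that elements of $Z^-$ are skew-adjoint for the inner product $\langle\,,\,\rangle_\sigma$ and hence have purely imaginary eigenvalues while the idempotent $p_X$ has only real eigenvalues $0,1$, and concludes that $p_X\in Z^+$ (this final step tacitly uses that $Z$ is commutative, so that the $Z^+$- and $Z^-$-components of $p_X$ commute and can be brought to a simultaneous normal form). Your argument bypasses that spectral reasoning entirely: you classify the central idempotents of $A$ outright (there are exactly four, since $A$ is a product of two simple ideals whose centers are fields), note that the anti-automorphism $\sigma$ permutes them, discard $0$ and $1$ for trivial reasons, and eliminate $\sigma(e_X)=e_Y$ by a one-line positive-definiteness computation. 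What each approach buys: the paper's method generalizes more transparently to an arbitrary number of irreducible factors (as used later in Section 2.3, where the center has more idempotents and one wants to say a whole family of projectors $p_i$ is $\sigma$-fixed), whereas your method is more elementary and self-contained for the two-block case at hand, avoiding the implicit simultaneous-normalization step. Your closing observation that $\sigma\circ\tau=\tau\circ\sigma$ plays no role here, only $\sigma(A)=A$ together with the Cartan property, is correct and matches the paper, which likewise never invokes $\tau$ in this proof.
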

\begin{proof}
Let $p_{X}$  denote the projection of $W$ onto $X$ with respect to the decomposition $W=X\oplus Y$. It is an  element of  the center $Z$ of $A$. It is clear that $Z$ is $\sigma$-stable. Hence
\[
Z=Z^{+}\oplus Z^{-},\quad  \textrm{where $Z^{+}=\{z\in Z|z^{\sigma}=z\}$ and $Z^{-}=\{z\in Z|z^{\sigma}=-z\}$. }
\]
For each $z\in Z^-$, $z^\sigma=-z$ implies that all the eigenvalues of  $z\in \mathrm{End}_\mathbb R(W)$ are pure imaginary numbers. Likewise all eigenvalues of all elements of  $ Z^{+}$ are real numbers. Since all the eigenvalues of $p_X$ are real numbers, $p_X\in Z^+$, that is, $p_{X}$ is fixed by $\sigma$.  Therefore,  for all $w_{X}\in X$ and $w_{Y}\in Y$,
\begin{equation*}
  \langle w_{X},w_{Y}\rangle_{\sigma}=\langle p_{X}\cdot w_{X},w_{Y}\rangle_{\sigma}=\langle w_{X},p_{X}^{\sigma}\cdot w_{Y}\rangle_{\sigma}=\langle w_{X},p_{X}\cdot w_{Y}\rangle_{\sigma}=0.
\end{equation*}
This  finishes the proof.
\end{proof}

\begin{prop}\label{involutionp for type II case}
The uniqueness assertion of Proposition $\ref{involutionp}$ holds for  irreducible dual pairs of type II.
\end{prop}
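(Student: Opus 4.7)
The plan is a two-step reduction. First, I reduce the problem to matching the restrictions $\sigma|_A$ and $\sigma|_{A'}$ by invoking Lemma \ref{Commuting Lemma on Involutions}; second, I show that these restrictions determine $\sigma$ up to a single positive-scalar ambiguity that is killed by conjugation with a central one-parameter subgroup of $G$.

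Let $\sigma$ and $\sigma_0$ be two Cartan involutions on $\mathrm{End}_\mathbb R(W)$ satisfying \eqref{Cartan Decomposition 3}. Since $A=\mathrm{End}_D(U_1)\oplus\mathrm{End}_D(U_1^*)$ is a product of two simple ideals exchanged by $\tau$, Lemma \ref{Commuting Lemma on Involutions} applied to $(A,\tau|_A)$ supplies a $g\in\{a\in A^\times:a^\tau a=1\}=G$ with $\sigma|_A=g\sigma_0 g^{-1}|_A$; after replacing $\sigma_0$ by its $g$-conjugate I assume $\sigma|_A=\sigma_0|_A$. The same lemma applied to $(A',\tau|_{A'})$ produces a $g'\in G'$ that matches $\sigma_0|_{A'}$ with $\sigma|_{A'}$, and because $G'\subset A'$ commutes pointwise with $A$ this further adjustment leaves the restrictions to $A$ unchanged. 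So I may assume $\sigma|_A=\sigma_0|_A$ and $\sigma|_{A'}=\sigma_0|_{A'}$.

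By Lemma \ref{Orthogonal Direct Sum Decomposition}, both $\langle\,,\,\rangle_\sigma$ and $\langle\,,\,\rangle_{\sigma_0}$ are orthogonal sums along $W=X\oplus Y$. I claim that once $\sigma|_A$ and $\sigma|_{A'}$ are fixed, each of the restrictions $\langle\,,\,\rangle_\sigma|_X$ and $\langle\,,\,\rangle_\sigma|_Y$ is determined up to a positive scalar. The argument parallels the proof of Proposition \ref{involutionp for type I case}. Fix a unit vector $u_0^*\in U_2^*$ with respect to an inner product corresponding to $\sigma|_{\mathrm{End}_D(U_2^*)}$. The unit vectors in $U_2^*$ form a single orbit under $\{y\in\mathrm{End}_D(U_2^*)^\times:\sigma(y)y=1\}$, which acts on $X$ through $\mathrm{End}_D(U_2^*)\subset A'$ and preserves $\langle\,,\,\rangle_\sigma$ by the adjoint-involution property; hence $\|u_1\|^2:=\langle u_1\otimes u_0^*,\,u_1\otimes u_0^*\rangle_\sigma$ is independent of the chosen unit vector. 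The $D$-balancing identity $(u_1\cdot d)\otimes u_0^*=u_1\otimes(d\cdot u_0^*)$, combined with $\|d\cdot u_0^*\|=|d|$ on $U_2^*$, gives $\|u_1\cdot d\|=|d|\cdot\|u_1\|$ for all $d\in D$; the parallelogram law then produces a $\iota$-Hermitian inner product on $U_1$ whose adjoint involution is $\sigma|_{\mathrm{End}_D(U_1)}$, unique up to positive scalar. Polarization reconstructs $\langle\,,\,\rangle_\sigma|_X$ from the resulting inner products on $U_1$ and $U_2^*$ up to a positive scalar, and the analogous reasoning settles $\langle\,,\,\rangle_\sigma|_Y$.

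Write $\langle\,,\,\rangle_\sigma=c_X\langle\,,\,\rangle_X^\natural\oplus c_Y\langle\,,\,\rangle_Y^\natural$ and $\langle\,,\,\rangle_{\sigma_0}=c_X^{(0)}\langle\,,\,\rangle_X^\natural\oplus c_Y^{(0)}\langle\,,\,\rangle_Y^\natural$ for positive scalars and common canonical models $\langle\,,\,\rangle_X^\natural,\langle\,,\,\rangle_Y^\natural$. Since $\sigma$ is unchanged by an overall positive rescaling of $\langle\,,\,\rangle_\sigma$, the equality $\sigma=\sigma_0$ is equivalent to $c_Y/c_X=c_Y^{(0)}/c_X^{(0)}$. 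To enforce this, put $a:=(\mu I_{U_1},\mu^{-1}I_{U_1^*})\in A^\times$ for $\mu\in\mathbb R^\times\subset Z(D)$; a direct check using $\tau(\mu I_{U_1})=\mu I_{U_1^*}$ gives $a^\tau a=1$, so $a\in G$, and $a$ is central in both $A$ and $A'$, so conjugation by $a$ fixes $\sigma|_A$ and $\sigma|_{A'}$. Because $a$ acts on $X$ as multiplication by $\mu$ and on $Y$ as multiplication by $\mu^{-1}$, this conjugation rescales $c_X$ by $\mu^{-2}$ and $c_Y$ by $\mu^2$, so the ratio $c_Y/c_X$ is rescaled by $\mu^4$; choosing $\mu$ with $\mu^4=c_X c_Y^{(0)}/(c_Y c_X^{(0)})$ aligns the ratios and completes the proof.

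The main obstacle is the rigorous justification of the tensor-product claim in the third paragraph and its uniform treatment across $D=\mathbb R,\mathbb C,\mathbb H$: the quaternionic case is delicate because right multiplication on $U_1$ by a non-central quaternion is not $\mathbb H$-linear, but as in the type I proof the $D$-balancing identity inside $U_1\otimes_D U_2^*$ together with the freedom to rescale the unit vector in $U_2^*$ circumvents this obstruction.
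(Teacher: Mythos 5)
Your proof is correct and follows essentially the same route as the paper: both use Lemma \ref{Orthogonal Direct Sum Decomposition} to split $\langle\,,\,\rangle_\sigma$ along $W=X\oplus Y$, reduce the inner products on $X$ and $Y$ to the type-I argument of Proposition \ref{involutionp for type I case}, and absorb the remaining relative scalar into conjugation by the subgroup $Z_{\mathbb R^\times}\subset G$. Your presentation is slightly reorganized—matching $\sigma|_A$ and $\sigma|_{A'}$ first via Lemma \ref{Commuting Lemma on Involutions} and then exhibiting the explicit element $a=(\mu I_{U_1},\mu^{-1}I_{U_1^*})$ with $\mu^4=c_Xc_Y^{(0)}/(c_Yc_X^{(0)})$—which makes concrete the $Z_{\mathbb R^\times}$-conjugation that the paper invokes more abstractly.
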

\begin{proof}
By Lemma \ref{Orthogonal Direct Sum Decomposition}, the decomposition  $W=X\oplus Y$ of \eqref{decomwxy} is an orthogonal direct sum decomposition of $\langle\,,\,\rangle_{\sigma}$, while $X$ and $Y$ are two maximal totally isotropic subspaces of $W$ with respect to $\langle\,,\,\rangle_{W}$. Hence the semisimple algebra $\mathrm{End}_{\mathbb{R}}(X)\times \mathrm{End}_{\mathbb{R}}(Y)$ is $\sigma$-stable and $\tau$-stable. Denote by $\widetilde{\sigma}$ and $\widetilde{\tau}$ the restrictions of $\sigma$ and $\tau$ on $\mathrm{End}_{\mathbb{R}}(X)\times \mathrm{End}_{\mathbb{R}}(Y)$, respectively. Thus $(\mathrm{End}_{\mathbb{R}}(X),\mathrm{End}_{\mathbb{R}}(Y))$ is a pair of $\widetilde{\sigma}$-stable simple subalgebras of $\mathrm{End}_{\mathbb{R}}(W)$, which are exchanged by $\widetilde{\tau}$.  By Lemma \ref{Criterion for Cartan Involutions}, $\widetilde{\sigma}$ is a Cartan involution on $\mathrm{End}_{\mathbb{R}}(X)\times \mathrm{End}_{\mathbb{R}}(Y)$. As in the proof of Lemma \ref{Decomposition of Cartan Involutions}, $\sigma$ is determined by $\widetilde{\sigma}$, up to conjugation by
\begin{equation*}
  Z_{\mathbb R^{\times}}:=\{ap_{X}+a^{-1}p_{Y}|a\in \mathbb{R}^{\times}\},
\end{equation*}
where $p_{X}$ and $p_{Y}$ are the projections of $W$ onto $X$ and $Y$ with respect to the decomposition $W=X\oplus Y$, respectively. By the same argument as in the proof of Lemma \ref{Commuting Lemma on Involutions}, the Cartan involution $\widetilde{\sigma}=\sigma_{X}\times \sigma_{Y}$ is determined by $\sigma_{X}$, where $\sigma_{X}$ and $\sigma_{Y}$ are the restrictions of $\sigma$ on $\mathrm{End}_{\mathbb{R}}(X)$ and $\mathrm{End}_{\mathbb{R}}(Y)$, respectively. This amounts to saying that $\sigma$ is determined by $\sigma_{X}$, up to conjugation by $Z_{\mathbb R^{\times}}$. Note that $Z_{\mathbb R^{\times}}$ is a subgroup of $Z^{\times}\bigcap \mathrm{Sp}(W)$, where $Z$ is the center of both $A$ and $A'$.

As in the proof of Proposition \ref{involutionp for type I case}, let $\sigma_{1}$ and  $\sigma_{2}$ be the restrictions of $\sigma$ on $A$ and $A'$, respectively. Define  $\tau_{1}$ and $\tau_{2}$ analogously.  Recall that
\[
A=A_{X}\oplus A_{Y},\quad  \textrm{where $A_{X}=\mathrm{End}_{D}(U_{1})$ and $A_{Y}=\mathrm{End}_{D}(U_{1}^{*})$, }
\]
\[
A'=A'_{X}\oplus A'_{Y},\quad  \textrm{where $A'_{X}=\mathrm{End}_{D}(U_{2}^{*})$ and $A'_{Y}=\mathrm{End}_{D}(U_{2})$. }
\]
By the same argument as in the proof of Lemma \ref{Commuting Lemma on Involutions}, $\sigma_{1}$ and $\sigma_{2}$ are determined by $\sigma_{1}|_{A_X}$ and $\sigma_{2}|_{A'_X}$, respectively. By the same argument as in the proof of Proposition \ref{involutionp for type I case}, the restriction of $\langle\,,\,\rangle_{\sigma}$ on $X\times X$ is determined by $\langle\,,\,\rangle_{1}$, which is an inner product on $U_{1}$ corresponding to $\sigma_{1}|_{A_X}$, and $\langle\,,\,\rangle_{2}$, which is an inner product on $U_{2}^{*}$ corresponding to $\sigma_{2}|_{A'_X}$. Consequently, $\sigma_{X}$ is determined by $\sigma_{1}|_{A_X}$ and $\sigma_{2}|_{A'_X}$. Therefore this lemma follows by Lemma \ref{Uniquessness of Cartan Involutions}.
\end{proof}

In general, the dual pair $(G,G')$ is the direct product of a family $\{(G_{i},G_{i}')\}_{1\leq i\leq m}$ of irreducible dual pairs,  where $(G_{i},G_{i}')$ is a dual pair in  $\mathrm{Sp}(W_{i})$, and
\begin{equation} \label{decw}
 W=\bigoplus_{i=1}^m W_i
\end{equation}
is an orthogonal direct sum of symplectic spaces. Let $(A_{i}, A_{i}')$ be as in Section \ref{Irreducible dual pairs}. Let ${Z}_{i}$ ($1\leq i\leq m$) be the center of $A_{i}$, which also equals to the center of   $A_{i}'$. Then $(A,A')$ is the direct sum of $\{(A_{i},A_{i}')\}_{1\leq i\leq m}$ and
\[
   Z:=\bigoplus_{i=1}^m Z_i
\]
is the center of both $A$ and $A'$. Denote by $\tau_{i}$ ($1\leq i\leq m$) the restriction of $\tau$ on $\mathrm{End}_{\mathbb{R}}(W_{i})$. By Proposition \ref{involutionp for type I case} and Proposition \ref{involutionp for type II case}, there exists a Cartan involution $\sigma_{i}$ on $\mathrm{End}_{\mathbb{R}}(W_{i})$ satisfying
\begin{equation*}
  \sigma_{i}\circ \tau_{i}=\tau_{i}\circ \sigma_{i}, \quad \sigma_{i}(A_{i})=A_{i}, \quad \sigma_{i}(A_{i}')=A_{i}',
\end{equation*}
for each $1\leq i\leq m$. By Lemma \ref{Decomposition of Cartan Involutions}, there exists a Cartan involution $\sigma$ on $\mathrm{End}_{\mathbb{R}}(W)$ satisfying \eqref{Cartan Decomposition 3}. We only need to prove the uniqueness of $\sigma$.

Let $\sigma$ be a Cartan involution satisfying \eqref{Cartan Decomposition 3} and $\langle\,,\,\rangle_{\sigma}$ an inner product corresponding to $\sigma$. For each $1\leq i\leq m$, let  $p_{i}\in \mathrm{End}_\mathbb R(W)$ denote  the projection of $W$ onto $W_{i}$ with respect to the decomposition \eqref{decw}. It is an element of ${Z}_{i}\subset Z$. By the same argument as in the proof of Lemma \ref{Orthogonal Direct Sum Decomposition}, we show that $p_{i}$ is fixed by $\sigma$. Consequently,  \eqref{decw} is an orthogonal decomposition with respect to $\langle\,,\,\rangle_{\sigma}$, and $\mathrm{End}_{\mathbb{R}}(W_{i})$ is $\sigma$-stable for all $1\leq i\leq m$. By Lemma \ref{Decomposition of Cartan Involutions}, the Cartan involution $\sigma$ is determined by $\prod_{i=1}^{m}\sigma_{i}$, where $\sigma_{i}$  is the restriction of $\sigma$ on $\mathrm{End}_{\mathbb{R}}(W_{i})$. By Proposition \ref{involutionp for type I case} and Proposition \ref{involutionp for type II case}, $\sigma_{i}$ is unique up to conjugation by $G_{i}\times G_{i}'$, for each $1\leq i\leq m$. Therefore $\sigma$ is unique up to conjugation by $G\times G'$. This finishes the proof of Theorem \ref{involutionp}.

\section{Type II irreducible dual pairs}\label{sectypeii}

Let the notation be as in the Introduction. Recall that for each closed subgroup $E$ of $\mathrm{Sp}(W)$, $\widetilde E$ denotes its double cover induced by the metaplectic cover $\widetilde{\mathrm{Sp}}(W)\rightarrow \mathrm{Sp}(W)$. If $E$ is reductive as a Lie group,  then so is $\widetilde E$, and we write $\mathrm{Irr}^\mathrm{gen}(\widetilde E)$ for the subset of $\mathrm{Irr}(\widetilde E)$ of genuine irreducible Casselman-Wallach representations. Here and as usual, ``genuine" means that the representation does not descend to a representation of $E$.

Note that by the one-one correspondence property of local theta correspondence, if one of the three inclusions in \eqref{inclusion} is an equality, then so are the other two.
We first treat the simplest case of type II irreducible dual pairs.

\begin{prop}\label{typeii}
Conjecture \ref{Conjecture on Coincidence0} holds for all type II irreducible dual pairs.
\end{prop}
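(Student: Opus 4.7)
By the one-one correspondence property recalled in the paragraph before the proposition, it is enough to upgrade algebraic intertwiners to continuous ones: given $(\pi,\pi')\in \mathscr R^{\mathrm{alg}}(\widetilde G\times \widetilde{G'},\omega_{G,G'})$, the task reduces to producing a nonzero continuous $\widetilde G\times \widetilde{G'}$-map $\omega_{G,G'}\to \pi\,\widehat\otimes\,\pi'$ out of a given nonzero Harish-Chandra intertwiner.

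The plan is to exploit the mixed (Schrödinger) realization of the Weil representation, which is particularly explicit in type II. Recall from Section \ref{Irreducible dual pairs} that $W=X\oplus Y$ with $X=U_{1}\otimes_{D}U_{2}^{*}$ a Lagrangian. In the type II case the metaplectic cover splits over both $G=\mathrm{GL}(U_{1})$ and $G'=\mathrm{GL}(U_{2})$, and $\omega_{G,G'}$ can be realized as $\mathcal S(X)$ on which $\widetilde G\times \widetilde{G'}$ acts---up to a genuine character of the covering---by twisted right translation $\phi(x)\mapsto |\det g_{1}|^{\alpha}|\det g_{2}|^{\beta}\phi(g_{1}^{-1}xg_{2})$. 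The Harish-Chandra module $\omega_{G,G'}^{\mathrm{alg}}$ is then the dense subspace of polynomial multiples of a fixed Gaussian in $\mathcal S(X)$.

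The group $G\times G'$ acts on $X$ with finitely many orbits indexed by the matrix rank $0\le k\le \min(\dim_{D}U_{1},\dim_{D}U_{2})$, and the stabilizer of a generic rank-$k$ point is a product of two parabolic subgroups together with a smaller general linear group acting diagonally. Using this stratification I would construct a $\widetilde G\times \widetilde{G'}$-equivariant filtration of $\mathcal S(X)$ whose graded pieces are (smoothly) parabolically induced from Weil representations attached to lower-rank type II dual pairs, and an algebraic filtration of $\omega_{G,G'}^{\mathrm{alg}}$ compatible with it (given by polynomial degree transverse to the stratum). An induction on $\min(\dim_{D}U_{1},\dim_{D}U_{2})$ then shows that any irreducible pair $(\pi,\pi')\in \mathscr R^{\mathrm{alg}}$ is detected on exactly one stratum, where it is forced by Frobenius reciprocity to correspond to a lower-rank algebraic theta lift to which the inductive hypothesis applies.

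Continuity is then handed over to the Casselman-Wallach machinery: the degenerate principal series appearing in each stratum is Casselman-Wallach because parabolic induction preserves that class, so any irreducible Harish-Chandra quotient lifts uniquely to a smooth Fr\'echet quotient; composing with the continuous projection from $\mathcal S(X)$ onto the appropriate graded piece yields the required continuous $\widetilde G\times\widetilde{G'}$-intertwiner $\omega_{G,G'}\to \pi\,\widehat\otimes\,\pi'$. The main obstacle I expect is keeping the algebraic and smooth filtrations of $\omega_{G,G'}$ rigorously in step, so that the stratum detecting $(\pi,\pi')$ algebraically is the same stratum on which the smooth intertwiner is constructed; the genuine character twists coming from the cover and the precise transverse-polynomial filtration on $\omega_{G,G'}^{\mathrm{alg}}$ have to be tracked carefully, but once the bookkeeping is in place the inductive step on $\min(\dim_{D}U_{1},\dim_{D}U_{2})$ (with trivial base case $\min=0$) should close the argument.
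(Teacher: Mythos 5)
Your plan takes a genuinely different route from the paper's, and as written it has serious gaps. The paper's proof of this proposition is very short: assuming $r\leq s$, the Godement--Jacquet zeta integrals together with Kudla's persistence principle give $\mathrm{Irr}^{\mathrm{gen}}(\widetilde G)\subset\mathscr R^\infty(\widetilde G,\omega_{G,G'})$; since also $\mathscr R^\infty(\widetilde G,\omega_{G,G'})\subset\mathscr R^{\mathrm{alg}}(\widetilde G,\omega_{G,G'})\subset\mathrm{Irr}^{\mathrm{gen}}(\widetilde G)$, all three sets coincide. Nothing is ``upgraded'' from algebraic to continuous --- the point, special to $\mathrm{GL}$, is that for the smaller member of the pair the smooth occurrence set is already everything. (The inclusion of the algebraic intertwiner data into the smooth one is then free; for the other dual pairs, where this trick is unavailable, the paper falls back on conservation relations.)

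The obstacles in your stratification argument are more than bookkeeping. First, over $\mathbb R$ the rank stratification of $X$ does not yield a finite filtration of $\mathcal S(X)$ by closed $\widetilde G\times\widetilde{G'}$-submodules with Casselman--Wallach graded pieces: along a lower-rank stratum one must grade by \emph{all} transverse Taylor orders, producing infinitely many pieces, and there is no reason an algebraic intertwiner factors through any single one. Second, even if $(\pi,\pi')$ were detected on a graded piece $F_k/F_{k+1}$, automatic continuity would only give a continuous map on the closed submodule $F_k$; the ``continuous projection from $\mathcal S(X)$ onto the appropriate graded piece'' that you invoke does not exist (the $F_k$ are not topological direct summands, and only the top quotient $\mathcal S(X)/F_1$ admits a continuous surjection from $\mathcal S(X)$), so you do not obtain the required nonzero element of $\mathrm{Hom}_{\widetilde G\times\widetilde{G'}}(\omega_{G,G'},\pi\widehat\otimes\pi')$. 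Third, the induction is not actually closed: the open full-rank stratum, which carries the essential content, does not reduce to a smaller type II pair, and a base case of $\min=0$ says nothing about it. If you want to pursue a geometric argument, the natural move on the open orbit is precisely the Godement--Jacquet integral, which is what powers the paper's two-line proof.
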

\begin{proof}
Suppose that  $(G,G')=(\mathrm{GL}_{r}(D),\mathrm{GL}_{s}(D))$ is a type II irreducible dual pair. Without loss of generality, assume that $r\leq s$.  By Godment-Jacquet zeta integrals \cite[Section I.8]{GJ} and Kudla's persistence principle  \cite{Ku1}, we know that all representations in  $\mathrm{Irr}^\mathrm{gen}(\widetilde G)$ occur in the smooth theta correspondence, namely,
\[
      \mathrm{Irr}^\mathrm{gen}(\widetilde G)\subset   \mathscr{R}^{\infty}(\widetilde{G},\omega_{G,G'}).
\]
Since
\[
   \mathscr{R}^{\infty}(\widetilde{G},\omega_{G,G'})\subset \mathscr{R}^{\mathrm{alg}}(\widetilde{G},\omega_{G,G'})\subset \mathrm{Irr}^\mathrm{gen}(\widetilde G),
\]
we conclude that $ \mathscr{R}^{\infty}(\widetilde{G},\omega_{G,G'})= \mathscr{R}^{\mathrm{alg}}(\widetilde{G},\omega_{G,G'})$. This proves the proposition.
\end{proof}

Similar proof shows the following result for quaternionic  dual pairs.

\begin{prop}
Assume that $(G, G')=(\mathrm{G}(U),\mathrm{G}(V))$ is a quaternionic dual pair, where $U$ is a quaternionic Hermitian space. If $\dim U\leq \dim V$, then the three inclusions in \eqref{inclusion}  are all equalities.
\end{prop}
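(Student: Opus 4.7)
The plan is to adapt the proof of Proposition~\ref{typeii} almost verbatim, replacing the Godement--Jacquet zeta integral by its quaternionic doubling analogue. By the one--one correspondence property of local theta correspondence recalled at the start of this section, the three inclusions in \eqref{inclusion} become equalities as soon as any one of them does, so it suffices to prove the fullness statement
\[
   \mathrm{Irr}^{\mathrm{gen}}(\widetilde G) \subset \mathscr{R}^{\infty}(\widetilde G,\omega_{G,G'}),
\]
which, in view of the trivial sandwich $\mathscr{R}^{\infty}(\widetilde G,\omega_{G,G'}) \subset \mathscr{R}^{\mathrm{alg}}(\widetilde G,\omega_{G,G'}) \subset \mathrm{Irr}^{\mathrm{gen}}(\widetilde G)$, immediately delivers $\mathscr{R}^{\infty}=\mathscr{R}^{\mathrm{alg}}$ on the $\widetilde G$ side.

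To establish fullness, I would fix $\pi\in \mathrm{Irr}^{\mathrm{gen}}(\widetilde G)$ and argue in two steps. First, I would invoke the doubling zeta integral of Piatetski-Shapiro--Rallis adapted to the quaternionic pair $(\mathrm{G}(U),\mathrm{G}(V^{\sharp}))$ to produce a non-zero smooth theta lift of $\pi$ to $\widetilde{\mathrm{G}(V^{\sharp})}$ for some quaternionic skew-Hermitian space $V^{\sharp}$ of sufficiently large dimension; this is the direct quaternionic analogue of how Godement--Jacquet is used in Proposition~\ref{typeii}. Second, I would apply Kudla's persistence principle \cite{Ku1} to descend $V^{\sharp}$ down the tower of quaternionic skew-Hermitian spaces to the given $V$: the conservation relations of \cite{SZ} bound the first occurrence index of $\pi$ by $\dim U$, and the hypothesis $\dim U\le \dim V$ is precisely what guarantees that the descent reaches $V$ rather than getting stuck at some larger space. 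This yields $\pi\in \mathscr{R}^{\infty}(\widetilde G,\omega_{G,G'})$ as required.

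The main obstacle will be the analytic input, namely verifying meromorphic continuation and suitable non-vanishing of the quaternionic doubling zeta integral in the Casselman--Wallach (not merely Harish--Chandra module) category, together with the careful first-occurrence bookkeeping on both the Hermitian and the skew-Hermitian sides needed to iterate persistence all the way down to $\dim V=\dim U$. Both should be routine adaptations of what is developed for the orthogonal--symplectic and unitary pairs in Sections~\ref{orthogonal-symplectic dual pairs} and~\ref{seunit}, so no fundamentally new conceptual input is expected beyond what is already available in this paper.
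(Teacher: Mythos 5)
Your overall strategy matches the paper's, so let me focus on where you diverge. The paper obtains the fullness statement
\[
\mathrm{Irr}^{\mathrm{gen}}(\widetilde G)\subset\mathscr{R}^{\infty}(\widetilde G,\omega_{G,G'})
\]
by a single citation of \cite[Theorem 7.3]{SZ}, which states exactly this under the hypothesis $\dim U\le\dim V$; the sandwich $\mathscr{R}^{\infty}\subset\mathscr{R}^{\mathrm{alg}}\subset\mathrm{Irr}^{\mathrm{gen}}$ then finishes as in Proposition~\ref{typeii}, precisely as you say. You instead propose to reprove the fullness from scratch via a quaternionic doubling zeta integral. That route is not needed and is substantially harder: \cite[Theorem 7.3]{SZ} is a ready-made non-vanishing input, and its proof in \cite{SZ} does not pass through archimedean doubling zeta integrals, sidestepping precisely the analytic issues (meromorphic continuation and non-vanishing in the Casselman--Wallach category) that you correctly flag as the main obstacle in your approach.

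Two smaller points. First, the "descend $V^{\sharp}$ down the tower by persistence" phrasing is backwards: Kudla's persistence principle only lets a nonzero lift propagate to \emph{larger} spaces in the tower, never smaller ones. The correct logic is the one you give parenthetically — the first occurrence index $\mathrm n(\pi)$ is bounded above by $\dim U$, hence $\mathrm n(\pi)\le\dim U\le\dim V$, and persistence then carries the lift up from the first-occurrence space to $V$. Second, once you have that bound on $\mathrm n(\pi)$, your step producing a lift at some large $V^{\sharp}$ is redundant, since the bound already implies the lift is nonzero somewhere. In short: replace your entire first-occurrence argument by the citation of \cite[Theorem 7.3]{SZ}, and the rest of your write-up is the paper's proof.
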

\begin{proof}
By \cite[Theorem 7.3]{SZ},
\[
      \mathrm{Irr}^\mathrm{gen}(\widetilde G) \subset \mathscr{R}^{\infty}(\widetilde{G},\omega_{G,G'}).
\]
Then the proposition follows by the  same argument as in the proof of Proposition \ref{typeii}.
\end{proof}

\section{Orthogonal-symplectic dual pairs}\label{orthogonal-symplectic dual pairs}

We are aimed to prove the following proposition in this section.
\begin{prop}\label{typeos}
Conjecture \ref{Conjecture on Coincidence0} holds for all real  orthogonal-symplectic dual pairs and all complex  orthogonal-symplectic dual pairs.
\end{prop}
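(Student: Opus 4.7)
The plan is to imitate the Witt-tower argument that underlies the conservation relations of \cite{SZ}, now combined with the inclusion $\mathscr{R}^{\infty}\subset\mathscr{R}^{\mathrm{alg}}$ from \eqref{inclusion}, in the same spirit as the quaternionic case treated in the previous section. By the one-one correspondence property recalled at the start of Section~\ref{sectypeii}, it suffices to prove any one of the three inclusions in \eqref{inclusion} is an equality; without loss of generality I will aim at $\mathscr{R}^{\mathrm{alg}}(\widetilde{G'},\omega_{G,G'})=\mathscr{R}^{\infty}(\widetilde{G'},\omega_{G,G'})$ where $G'$ is the symplectic factor, and allow the orthogonal factor $G$ to vary.

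Fix $\pi'\in\mathrm{Irr}^{\mathrm{gen}}(\widetilde{G'})$. The orthogonal groups paired with $G'$ sit inside a pair of complementary Witt towers (over $\mathbb R$ for the real case; over $\mathbb C$, the two towers are distinguished by the parity of the dimension of the orthogonal space). Let $m^{\mathrm{alg}}_{\pm}(\pi')$, respectively $m^{\infty}_{\pm}(\pi')$, denote the first occurrence index of $\pi'$ along each of the two towers, in the algebraic and in the smooth theta correspondence. The containment $\mathscr{R}^{\infty}\subset\mathscr{R}^{\mathrm{alg}}$ gives
\[
m^{\infty}_{+}(\pi')\ \geq\ m^{\mathrm{alg}}_{+}(\pi'),\qquad m^{\infty}_{-}(\pi')\ \geq\ m^{\mathrm{alg}}_{-}(\pi').
\]
The conservation relation of \cite{SZ}, applied to the smooth correspondence, asserts
\[
m^{\infty}_{+}(\pi')+m^{\infty}_{-}(\pi')\ =\ C
\]
for a universal constant $C$ depending only on the tower pair (not on $\pi'$). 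The corresponding identity for the algebraic theta correspondence, $m^{\mathrm{alg}}_{+}(\pi')+m^{\mathrm{alg}}_{-}(\pi')=C$ with the same constant, is in fact what is actually established inside the proof of \cite{SZ} (the conservation relation is proved on the Harish-Chandra module side and then transferred). Combining these two equalities with the pointwise inequalities above forces
\[
m^{\infty}_{\pm}(\pi')\ =\ m^{\mathrm{alg}}_{\pm}(\pi').
\]

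Once the first occurrences coincide, the final step is to propagate upward through the tower. For this I would invoke the archimedean analogue of Kudla's persistence principle in the orthogonal-symplectic setting, which is available for both the algebraic and smooth correspondences: occurrence at one level of a Witt tower implies occurrence at every higher level. Consequently the two subsets $\mathscr{R}^{\mathrm{alg}}(\widetilde{G'},\omega_{G,G'})$ and $\mathscr{R}^{\infty}(\widetilde{G'},\omega_{G,G'})$ of $\mathrm{Irr}^{\mathrm{gen}}(\widetilde{G'})$ coincide for every orthogonal factor $G$ in either tower. Together with the bijectivity of the two correspondences, this completes the proposition.

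The main obstacle I anticipate is the clean bookkeeping in the step matching the constants for the algebraic and smooth conservation relations; one must check carefully that the constant $C$ appearing in \cite{SZ}'s conservation statement is literally the same in both versions, so that the forced equality $m^{\infty}_{\pm}=m^{\mathrm{alg}}_{\pm}$ genuinely goes through. A secondary technical issue is ensuring the complex orthogonal-symplectic case is handled uniformly: the two complementary towers are labeled by the parity of the complex dimension of the orthogonal space, and one should verify that the conservation constant and the persistence step transfer faithfully through restriction of scalars from $\mathbb C$ to $\mathbb R$.
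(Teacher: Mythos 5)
Your overall skeleton is right, and it is in fact the same logical skeleton the paper uses: combine the smooth conservation relation of \cite{SZ} with an algebraic lower bound on first occurrence indices, and let the inclusion $\mathscr{R}^{\infty}\subset\mathscr{R}^{\mathrm{alg}}$ squeeze them into equality; then propagate upward by persistence. But there is a genuine gap in the step that carries all the weight.

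You assert that the algebraic conservation relation $m^{\mathrm{alg}}_{+}(\pi')+m^{\mathrm{alg}}_{-}(\pi')=C$ ``is in fact what is actually established inside the proof of \cite{SZ}.'' That is not the case, and if it were, Conjecture~\ref{Conjecture on Coincidence0} for these pairs would essentially already be a corollary of \cite{SZ} (which it is not claimed to be). What \cite{SZ} proves is the \emph{smooth} conservation relation. Observe moreover that the hard direction of the algebraic relation, namely $m^{\mathrm{alg}}_{+}+m^{\mathrm{alg}}_{-}\le C$, comes for free from the smooth relation and the trivial inequalities $m^{\mathrm{alg}}_{\pm}\le m^{\infty}_{\pm}$; so the only thing your argument genuinely needs, and the only thing that is missing, is the lower bound $m^{\mathrm{alg}}_{+}(\pi')+m^{\mathrm{alg}}_{-}(\pi')\ge C$. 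This does \emph{not} follow from \cite{SZ}: it requires (a) a computation, at the level of Harish-Chandra modules, of the first occurrence index of a specific degenerate representation (the sign character on the orthogonal side, or the appropriate character on the symplectic side if you insist on fixing $G'$), and (b) a doubling/seesaw argument carried out in the category of $(\mathfrak g,K)$-modules. The paper supplies exactly (a) via \cite[Proposition~1.4]{AB1} and \cite[Proposition~2.10]{Pa3} (its Lemma~\ref{nsgn}) and then invokes the well-known doubling argument for (b); without an analogue of this input your deduction is circular.

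A secondary point: the paper fixes the orthogonal factor $\mathrm{G}(U)$ and lets the symplectic side vary, so its conservation relation takes the form $\mathrm{n}(\pi)+\mathrm{n}(\pi\otimes\mathrm{sgn})=\dim U$ on a single tower of symplectic spaces, and the needed algebraic first-occurrence computation is the one for $\mathrm{sgn}$ on $\mathrm{G}(U)$, which is available in the literature cited. You instead fix the symplectic factor and run over two orthogonal Witt towers; this is a legitimate alternative in principle, but then the seed computation you need is the algebraic first occurrence of a degenerate genuine representation of the metaplectic group in an orthogonal tower, and you would have to locate or prove that. It is cleaner, and more faithful to the available references, to fix the orthogonal side as the paper does.
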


In this section, assume that  $D=\mathbb R$ or $\mathbb C$, and let $U$ be a finite-dimensional  symmetric bilinear space over $D$.   For each $n\geq 0$, let $V_n$ be a symplectic space over $D$ of dimension $2n$. Denote by $\mathrm G_n(U)$ the double cover of $\mathrm G(U)$ associated to the pair $(\mathrm G(U), \mathrm G(V_n))$. After twisting by an appropriate character, the representation of $\mathrm{G}_n(U)$ on $\omega_{\mathrm{G}(U),\mathrm{G}(V_n)}$ descents to a representation of $\mathrm{G}(U)$, which is denoted by $\omega_{U,n}$, so that
\[
  \mathrm{Hom}_{\mathrm G(U) \ltimes (U\otimes X_n)}(\omega_{U,n}, \mathbb C)\neq 0, \quad \textrm{for all Lagrangian subspace $X_n$ of $V_n$.}
\]
Here $U\otimes X_n$ is viewed as a subgroup of the Heisenberg group attached to the real symplectic space $U\otimes V_n$, and $\mathbb C$ stands for the trivial representation of $\mathrm G(U) \ltimes (U\otimes X_n)$. The reader is referred to \cite{Mo}, \cite{AB1} and \cite{AB2} for details about theta correspondence for (real or complex) orthogonal-symplectic dual pairs.

Fix a Cartan involution on $\mathrm{End}_D(U)$ which commutes with the adjoint involution of  the symmetric bilinear form on $U$, and denote by $\mathrm{K}(U)$ the corresponding maximal compact subgroup of $\mathrm{G}(U)$. Likewise, for each $n\geq 0$, fix a Cartan involution on $\mathrm{End}_D(V_n)$ which commutes with the adjoint involution of  the symplectic form on $V_n$. As in \eqref{productf}, we get a Cartan involution on $\mathrm{End}_{\mathbb R}(U\otimes V_n)$. Then as in the Introduction, we have a module  $\omega_{\mathrm{G}(U),\mathrm{G}(V_n)}^\mathrm{alg}$.  As in the smooth case, after twisting it by an appropriate character, we get a $(\mathfrak g(U), \mathrm{K}(U))$-module, to be denoted by  $\omega_{U,n}^\mathrm{alg}$. Here $\mathfrak g(U)$ denotes the complexified Lie algebra of $\mathrm{G}(U)$.

Let $\pi\in \mathrm{Irr}(\mathrm{G}(U))$. Define its first occurrence index
\[
  \mathrm{n}(\pi):= \min\{n\geq 0|  \mathrm{Hom}_{\mathrm{G}(U)}(\omega_{U,n},\pi)\neq 0\}.
\]
Define the algebraic analogue
 \[
  \mathrm{n}'(\pi):= \min\{n\geq 0|  \mathrm{Hom}_{\mathfrak g(U), \mathrm{K}(U)}(\omega_{U,n}^\mathrm{alg},\pi^\mathrm{alg})\neq 0\},
\]
where $\pi^\mathrm{alg}$ denotes the $(\mathfrak{g}(U), \mathrm{K}(U))$-module of $K(U)$-finite vectors in $\pi$. In view of  Kudla's persistence principle (both in the smooth case and the algebraic case), in order to prove Proposition \ref{typeos}, it suffices to show that
\[
   \mathrm{n}(\pi)= \mathrm{n}'(\pi).
\]

Let $\mathrm{sgn}$ denote the sign character of $\mathrm{G}(U)$.

\begin{lem}\label{nsgn}
The equality
\[
   \mathrm{n}'(\mathrm{sgn})= \dim U
   \]
   holds.
\end{lem}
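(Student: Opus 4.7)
The plan is to work in the polynomial realisation $\omega_{U,n}^{\mathrm{alg}}\cong\mathcal P(U\otimes_D X_n)$ attached to a Lagrangian $D$-subspace $X_n\subset V_n$; after choosing $D$-bases this identifies the underlying space with $\mathcal P(\mathrm M_{m\times n}(D))$ where $m=\dim_D U$, and $\mathrm{G}(U)\subset\mathrm{GL}_m(D)$ acts on matrices by left multiplication. The first preparatory step is to check that the normalising character in the definition of $\omega_{U,n}$---fixed by the requirement $\mathrm{Hom}_{\mathrm{G}(U)\ltimes(U\otimes X_n)}(\omega_{U,n},\mathbb C)\neq 0$---is trivial on $\mathrm{G}(U)$: ordinary Lebesgue integration on $U\otimes X_n$ already supplies a translation-invariant functional which is $\mathrm{G}(U)$-invariant for the standard action, so no twist is needed on the $\mathrm{G}(U)$-side. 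Consequently the $(\mathfrak g(U),K(U))$-structure on $\omega_{U,n}^{\mathrm{alg}}$ is the one integrating the naive polynomial action of $\mathrm{G}(U)$.

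Since $d\,\mathrm{sgn}=0$ and $K(U)\hookrightarrow\mathrm{G}(U)$ induces a bijection on connected components, an element of $\mathrm{Hom}_{\mathfrak g(U),K(U)}(\omega_{U,n}^{\mathrm{alg}},\mathrm{sgn})$ is nothing but an $\mathrm{sgn}$-semi-invariant linear functional on $\mathcal P(\mathrm M_{m\times n}(D))$ for the full $\mathrm{G}(U)$-action. Passing to the natural duality $\mathcal P(\mathrm M_{m\times n}(D))^{\ast}\cong\mathbb C[[\mathrm M_{m\times n}(D)]]$ represents such a functional by a formal power series $F$ with $F(gM)=\mathrm{sgn}(g)F(M)$, each of whose homogeneous components is then an $\mathrm{sgn}$-semi-invariant polynomial on $\mathrm M_{m\times n}(D)$.

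For the upper bound $\mathrm n'(\mathrm{sgn})\le\dim U$, take $n=m$; the polynomial $F(M)=\det M$ satisfies $\det(gM)=\det(g)\det(M)=\mathrm{sgn}(g)\det(M)$ and so exhibits a non-zero element of $\mathrm{Hom}_{\mathfrak g(U),K(U)}(\omega_{U,m}^{\mathrm{alg}},\mathrm{sgn})$. For the lower bound $\mathrm n'(\mathrm{sgn})\ge\dim U$, suppose $n<m$ and let $P$ be an $\mathrm{sgn}$-semi-invariant polynomial on $\mathrm M_{m\times n}(D)$. For a generic $M$ the column space $V_M\subset U$ is non-degenerate of $D$-dimension $n$, its orthogonal complement $V_M^{\perp}$ has $D$-dimension $m-n\ge 1$, and so $\mathrm{G}(U)$ contains a reflection $g$ supported on $V_M^{\perp}$ with $\det(g)=-1$ and $gM=M$. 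The equality $P(M)=P(gM)=-P(M)$ then forces $P=0$ on a Zariski-dense set, hence everywhere; every homogeneous piece of $F$ therefore vanishes, giving $F=0$ and the $\mathrm{Hom}$ space is trivial.

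The main obstacle in executing this plan sits entirely in the first paragraph: carefully unpacking the character twist in the paper's normalisation of $\omega_{U,n}$ and verifying, uniformly for $D=\mathbb R$ and $D=\mathbb C$, that the algebraic $\mathrm{G}(U)$-action on $\omega_{U,n}^{\mathrm{alg}}$ does coincide with the naive polynomial action with no residual Jacobian factor. Once that compatibility is in hand, the invariant-theoretic core---the determinant for the upper bound and the reflection argument for the lower bound---is completely elementary.
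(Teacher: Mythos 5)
The paper's proof of Lemma~\ref{nsgn} is a citation to \cite[Proposition 1.4]{AB1} and \cite[Proposition 2.10]{Pa3}, where the first occurrence of the sign character is worked out by a careful analysis of the relevant Harish--Chandra modules. Your proposal tries instead to compute directly, which is an attractive idea, but there is a genuine gap that your final paragraph misdiagnoses. The obstacle is not the Jacobian/character twist in the normalisation of $\omega_{U,n}$; you are right that this twist is harmless. The obstacle is that the isomorphism $\omega_{U,n}^{\mathrm{alg}}\cong\mathcal P(U\otimes_D X_n)$ is only an isomorphism of vector spaces and of $K(U)$-modules, not of $(\mathfrak g(U),K(U))$-modules with the naive polynomial action. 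In the Schr\"odinger realisation the $\widetilde C$-finite vectors are $\mathcal P(U\otimes X_n)\cdot e^{-\|\cdot\|_\sigma^2/2}$, and the Gaussian $e^{-\|\cdot\|_\sigma^2/2}$ is $K(U)$-invariant but \emph{not} $\mathrm G(U)$-invariant when $\mathrm G(U)$ is noncompact. Transporting the action through $pG\leftrightarrow p$, an element $X\in\mathfrak g(U)$ with associated linear vector field $\widetilde X$ acts on $\mathcal P$ by $p\mapsto \widetilde X p-\tfrac12\bigl(\widetilde X\|\cdot\|_\sigma^2\bigr)p$, and the quadratic correction term is nonzero precisely for $X\notin\mathfrak k(U)$. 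Consequently the Lie algebra action does not preserve degree (it raises it by up to $2$), and it does not integrate to the naive $\mathrm G(U)$-action on $\mathcal P$; a $(\mathfrak g(U),K(U))$-map $\omega_{U,n}^{\mathrm{alg}}\to\mathrm{sgn}$ is therefore \emph{not} the same as a $\mathrm{sgn}$-semi-invariant linear functional for the naive action. The same phenomenon appears in the Fock model cited by the paper (\cite{C}): there $K(U)$ does act by the naive polynomial action, but the noncompact part of $\mathfrak g(U)$ acts by a mixture of multiplication-by-quadratics and second-order constant-coefficient differential operators, again not by vector fields.

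Both halves of your invariant-theoretic core (the determinant for the upper bound, the reflection argument for the lower bound) are correct statements about $\mathrm{sgn}$-semi-invariants of the naive $\mathrm G(U)$-action on $\mathcal P(\mathrm M_{m\times n}(D))$, and they do prove $\mathrm n'(\mathrm{sgn})=\dim U$ in the special case where $\mathrm G(U)$ is compact (definite form), since there $\mathrm G(U)=K(U)$ and the Gaussian is fully $\mathrm G(U)$-invariant. But for indefinite $U$ the computation you propose is about a different module, and an additional argument relating the twisted $\mathfrak g(U)$-action (with its degree-shifting quadratic term) to the naive one would be needed --- for example by exploiting the degree filtration on $\mathcal P$, whose associated graded \emph{is} the naive polynomial module, and tracing through how $\mathrm{Hom}$ into a finite-dimensional character behaves under that filtration. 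As written, the proposal does not supply that bridge, so the lower bound $\mathrm n'(\mathrm{sgn})\ge\dim U$ (the substantive half) remains unproved for noncompact $\mathrm G(U)$.
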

\begin{proof}
 See \cite[Proposition 1.4]{AB1} and \cite[Proposition 2.10]{Pa3}.
\end{proof}

By a well-know argument (\emph{cf}. \cite[Section 2.1]{SZ}), Lemma  \ref{nsgn} implies
the following lemma.
\begin{lem}
The inequality
\begin{equation}\label{cons1}
   \mathrm{n}'(\pi)+   \mathrm{n}'(\mathrm{\pi \otimes sgn})\geq \dim U
\end{equation}
holds.
\end{lem}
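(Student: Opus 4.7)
The plan is to reduce \eqref{cons1} directly to Lemma \ref{nsgn} by manufacturing, from any $\pi \in \mathrm{Irr}(\mathrm G(U))$ together with its first-occurrence data $m := \mathrm n'(\pi)$ and $n := \mathrm n'(\pi \otimes \mathrm{sgn})$, a nonzero $(\mathfrak g(U), \mathrm K(U))$-equivariant map
\[
    \omega_{U, m+n}^{\mathrm{alg}} \longrightarrow \mathrm{sgn}.
\]
Once this is in hand, Lemma \ref{nsgn} gives $\dim U = \mathrm n'(\mathrm{sgn}) \leq m+n$ and the inequality follows.

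The first step is to record the tensor product decomposition
\[
    \omega_{U, m+n}^{\mathrm{alg}} \;\cong\; \omega_{U, m}^{\mathrm{alg}} \otimes_{\mathbb C} \omega_{U, n}^{\mathrm{alg}},
\]
as a $(\mathfrak g(U), \mathrm K(U))$-module under the diagonal action, arising from the symplectic orthogonal splitting $V_{m+n} = V_m \oplus V_n$ and the corresponding factorization of (say) the Fock model of the oscillator representation, with the descent characters twisting $\omega_{\mathrm G(U),\mathrm G(V_k)}$ from the double cover $\mathrm G_k(U)$ down to $\mathrm G(U)$ chosen compatibly on the three factors.

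Next, by definition of $m$ and $n$ there exist nonzero, hence surjective, equivariant maps
\[
    \phi : \omega_{U,m}^{\mathrm{alg}} \twoheadrightarrow \pi^{\mathrm{alg}}, \qquad \psi : \omega_{U,n}^{\mathrm{alg}} \twoheadrightarrow \pi^{\mathrm{alg}} \otimes \mathrm{sgn},
\]
which I would tensor together to obtain
\[
    \phi \otimes \psi : \omega_{U, m+n}^{\mathrm{alg}} \twoheadrightarrow \pi^{\mathrm{alg}} \otimes \pi^{\mathrm{alg}} \otimes \mathrm{sgn}
\]
for the diagonal action. I will then invoke the self-duality $\pi \cong \pi^{\vee}$ which is available for every irreducible Casselman–Wallach representation of a (real or complex) orthogonal group — supplied by an explicit element of $\mathrm G(U)$ conjugating every group element to the inverse of its transpose. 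The canonical evaluation pairing then becomes an equivariant surjection $\pi^{\mathrm{alg}} \otimes \pi^{\mathrm{alg}} \twoheadrightarrow \mathbb C$; tensoring with $\mathrm{sgn}$ and composing with $\phi \otimes \psi$ produces the required nonzero map $\omega_{U,m+n}^{\mathrm{alg}} \to \mathrm{sgn}$ as a composition of surjections.

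The main obstacle will be the first step: tracking the characters used to descend the genuine Weil representations of $\mathrm G_k(U)$ to honest representations of $\mathrm G(U)$ for $k = m, n, m+n$, and verifying that these twists fit together so that the tensor product is isomorphic to $\omega_{U,m+n}^{\mathrm{alg}}$ without a parasitic character correction (and, in particular, without secretly changing which character on the right-hand side is produced at the end). This is exactly the bookkeeping performed in the smooth setting in \cite[Section 2.1]{SZ}, and it transfers verbatim to Harish-Chandra modules by passing throughout to $\mathrm K(U)$-finite vectors.
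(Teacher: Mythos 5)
Your argument is correct and is precisely the ``well-known argument'' the paper invokes via its citation of \cite[Section 2.1]{SZ}: factor $\omega_{U,m+n}^{\mathrm{alg}} \cong \omega_{U,m}^{\mathrm{alg}} \otimes \omega_{U,n}^{\mathrm{alg}}$ through the orthogonal splitting $V_{m+n}=V_m\oplus V_n$, tensor the two first-occurrence quotient maps, contract against the self-duality of the irreducible orthogonal-group Harish--Chandra module $\pi^{\mathrm{alg}}$ to exhibit $\mathrm{sgn}$ as a quotient of $\omega_{U,m+n}^{\mathrm{alg}}$, and compare with Lemma~\ref{nsgn}. The descent-character bookkeeping you flag as the remaining concern is handled exactly as you expect by the paper's normalization $\mathrm{Hom}_{\mathrm G(U)\ltimes(U\otimes X_k)}(\omega_{U,k},\mathbb C)\neq 0$, taking $X_{m+n}=X_m\oplus X_n$.
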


On the other hand, by the conservation relation (\cite[Theorem 7.1]{SZ}), we have  that
\begin{equation}\label{cons2}
  \mathrm{n}(\pi)+   \mathrm{n}(\mathrm{\pi\otimes sgn})=\dim U.
\end{equation}
Together with the obvious inequalities
\[
\mathrm{n}'(\pi)\leq \mathrm{n}(\pi)\quad\textrm{and}\quad  \mathrm{n}'(\mathrm{\pi\otimes sgn})\leq \mathrm{n}(\mathrm{\pi \otimes sgn}),
\]
 \eqref{cons1} and \eqref{cons2} imply that $ \mathrm{n}(\pi)= \mathrm{n}'(\pi)$. This proves Proposition \ref{typeos}.

\section{Unitary dual pairs}\label{seunit}

For the proof of Theorem \ref{Main Theorem}, it remains to show the following proposition.

\begin{prop}\label{typeun}
Conjecture \ref{Conjecture on Coincidence0} holds for all unitary dual pairs.
\end{prop}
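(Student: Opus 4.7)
The plan is to adapt the strategy of Section \ref{orthogonal-symplectic dual pairs} to a unitary dual pair $(G, G') = (\mathrm{G}(U), \mathrm{G}(V))$, where $U$ is a Hermitian space over $\mathbb C$ and $V$ is skew-Hermitian. The essential new feature is that the skew-Hermitian spaces over $\mathbb C$ form \emph{two} Witt towers (indexed by the discriminant of the anisotropic kernel), so the role played by the sign character $\mathrm{sgn}$ in Section \ref{orthogonal-symplectic dual pairs} is replaced here by a two-tower conservation picture. Fix the Hermitian space $U$, and for $\pi \in \mathrm{Irr}^{\mathrm{gen}}(\widetilde{G})$ write $\mathrm{n}^{\pm}(\pi)$ and ${\mathrm{n}'}^{\pm}(\pi)$ for the smooth and algebraic first-occurrence indices in the two towers, respectively. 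As in the orthogonal-symplectic proof, Kudla's persistence principle (valid in both the smooth and algebraic settings) reduces the proposition to proving $\mathrm{n}^{\pm}(\pi) = {\mathrm{n}'}^{\pm}(\pi)$.

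First I would establish the unitary analogue of Lemma \ref{nsgn}: compute ${\mathrm{n}'}^{\pm}(\chi)$ for a suitable one-dimensional character $\chi$ of $G$, typically an appropriate power of the determinant character. The smooth first occurrences of characters of $U(p,q)$ in the two Witt towers have been worked out explicitly by A.\,Paul and collaborators, and the algebraic version should follow from the corresponding branching/differential-operator analysis on the Fock model of $\omega^{\mathrm{alg}}_{G,G'}$. By the tensor-product trick referenced as the ``well-known argument'' in the orthogonal-symplectic proof (cf.\ \cite[Section 2.1]{SZ}), this yields an algebraic lower bound of the form
\[
   {\mathrm{n}'}^{+}(\pi) + {\mathrm{n}'}^{-}(\pi) \geq 2\dim U + c,
\]
for a constant $c$ determined by the Witt-tower normalization.

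Second, the corresponding smooth conservation relation for unitary pairs established in \cite{SZ} supplies the exact equality
\[
   \mathrm{n}^{+}(\pi) + \mathrm{n}^{-}(\pi) = 2\dim U + c.
\]
Combining this with the trivial inequalities ${\mathrm{n}'}^{\pm}(\pi) \leq \mathrm{n}^{\pm}(\pi)$ and the lower bound above, the same squeezing argument as at the end of Section \ref{orthogonal-symplectic dual pairs} forces ${\mathrm{n}'}^{\pm}(\pi) = \mathrm{n}^{\pm}(\pi)$, completing the proof. The main obstacle is that the character group of $U(p,q)$ is $\mathbb Z$ rather than $\mathbb Z/2\mathbb Z$, so one must identify a correct one-dimensional character $\chi$ (possibly different for the two towers) whose algebraic first occurrence is tractable and produces a lower bound with constant $c$ \emph{exactly} matching that of the smooth conservation relation; the simultaneous bookkeeping for the two towers is more delicate than the single-tower manipulation in the orthogonal-symplectic case.
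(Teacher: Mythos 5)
Your high-level strategy -- smooth conservation relation from \cite{SZ} plus an algebraic lower bound plus a squeeze -- is indeed the paper's strategy, but your premise about the Witt-tower structure is wrong in a way that breaks the squeeze. Over $(\mathbb C, \overline{\phantom{a}})$ the (skew-)Hermitian spaces are classified by signature, so the Witt towers of Hermitian spaces $V_{p,q}$ are indexed by the signature difference $t=p-q$, which ranges over \emph{all} integers of a fixed parity; there is an infinite family of towers, not two. (The ``two towers indexed by discriminant'' picture you describe is the $p$-adic unitary story, or the real orthogonal one, not the archimedean unitary one.) This matters: the conservation relation of \cite[Theorem 7.6]{SZ} does \emph{not} give a uniform identity $\mathrm{n}^{+}(\pi)+\mathrm{n}^{-}(\pi)=2\dim U+c$ over two fixed towers. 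It says that for each $\pi$ there exist two $\pi$-\emph{dependent} towers $t_1,t_2$ with $\mathrm{n}_{t_1}(\pi)+\mathrm{n}_{t_2}(\pi)=2\dim U+2$; for other pairs of towers the sum is strictly larger.

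Correspondingly, the algebraic lower bound one can extract from the first-occurrence computation for the trivial genuine extension $1_U$ (the analogue of your $\chi$; see \cite[Lemma 3.1]{Pa2}, used in the paper) is tower-pair dependent: $\mathrm{n}'_{t_1}(\pi)+\mathrm{n}'_{t_2}(\pi)\geq 2\dim U+|t_1-t_2|$. This right-hand side only matches the conservation constant when $|t_1-t_2|=2$, so a direct squeeze with a fixed constant $c$, as you propose, does not close. The paper instead proves a conditional statement: \emph{if} $\mathrm{n}'_t(\pi)\leq \dim U$ then $\mathrm{n}_t(\pi)=\mathrm{n}'_t(\pi)$. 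The proof runs the squeeze only for the special pair $t_1,t_2$, and then a separate contradiction argument (using the tower-dependent lower bounds applied to $(t,t_1)$ and $(t,t_2)$) shows that any $t$ with $\mathrm{n}'_t(\pi)\leq\dim U$ must in fact equal $t_1$ or $t_2$. The hypothesis $\dim V\leq \dim U$ (one may assume this after possibly swapping $U$ and $V$) is then what guarantees the relevant index lands in the range where this conditional equality applies. Your proposal leaves out precisely this reduction and the identification of the two special towers, which is the genuinely new step beyond the orthogonal-symplectic case.
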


In this section, let $U$ be a complex skew-Hermitian space so that $\mathrm{G}(U)$ is a unitary group.  Let $\delta\in \mathbb Z/2\mathbb Z$. Define
\[
  \mathrm G_\delta (U):=\left\{
              \begin{array}{ll}
               \mathrm{G}(U)\times \{1,-1\},\quad & \textrm{if }\delta=0;\\
              \{(g,t)\in \mathrm{G}(U)\times \mathbb C^\times\mid \det g=t^2\}, \quad& \textrm{if } \delta=1,
              \end{array}
              \right.
\]
which is a double cover of $\mathrm G (U)$.
 For all $p,q\geq 0$, let $V_{p,q}$ be a Hermitian space of signature $(p,q)$. Assume that $p+q$ has parity $\delta$. Note that the double cover of $\mathrm{G}(U)$ associated to the dual pair $(\mathrm{G}(U), \mathrm{G}(V_{p,q}))$ is canonically isomorphic to $\mathrm G_\delta(U)$.
Thus $\omega_{\mathrm{G}(U), \mathrm{G}(V_{p,q})}$ is a representation of $\mathrm G_\delta(U)$. The reader is referred to \cite[Section 1]{Pa1} for details about the double covers associated to unitary dual pairs.

As in the last section, fix a Cartan involution on $\mathrm{End}_\mathbb C(U)$ which commutes with the adjoint involution of  the skwe-Hermitian form on $U$, and denote by $\mathrm K_\delta(U)$ the corresponding maximal compact subgroup of $\mathrm G_\delta(U)$. Likewise, fix a Cartan involution on $\mathrm{End}_\mathbb C(V_{p,q})$ which commutes with the adjoint involution of  the Hermitian  form on $V_{p,q}$. As in \eqref{productf}, we get a Cartan involution on $\mathrm{End}_{\mathbb R}(U\otimes V_{p,q})$. Then we define $\omega_{\mathrm{G}(U),\mathrm{G}(V_{p,q})}^\mathrm{alg}$ as in the Introduction.

Let $\pi\in \mathrm{Irr}^{\mathrm{gen}}(G_\delta(U))$. For  every integer $t$ with parity $\delta$, define the first occurrence index
\[
  \mathrm n_t(\pi):=\min\{ p+q\mid p,q\geq 0, \,p-q=t, \,\mathrm{Hom}_{\mathrm G_\delta(U)}(\omega_{\mathrm{G}(U), \mathrm{G}(V_{p,q})}, \pi)\neq 0\}.
\]
Define its algebraic analogue
 \[
  \mathrm n'_t(\pi):=\min\{ p+q\mid p,q\geq 0, \,p-q=t, \,\mathrm{Hom}_{\mathfrak g(U), \mathrm K_\delta(U)}(\omega_{\mathrm{G}(U), \mathrm{G}(V_{p,q})}^{\mathrm{alg}}, \pi^{\mathrm{alg}})\neq 0\}.
\]
Here and as before, $\frak g(U)$ denotes the complexified Lie algebra of $\mathrm{G}(U)$, and $\pi^{\mathrm{alg}}$ denotes the $(\mathfrak g(U), \mathrm K_\delta(U))$-module of $\mathrm K_\delta(U)$-finite vectors in $\pi$.

Recall the following fact of conservation relations from \cite[Theorem 7.6]{SZ}.
\begin{lem}\label{Conservation Relations for Unitary Dual Pairs}
There are two distinct integers $t_1$ and $t_2$, both of parity $\delta$,  such that
\begin{equation}\label{consu1}
  \mathrm{n}_{{t}_{1}}(\pi)+\mathrm{n}_{{t}_{2}}(\pi)=2\dim U+2.
\end{equation}
\end{lem}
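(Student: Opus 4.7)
The statement is verbatim \cite[Theorem 7.6]{SZ}, so the only ``proof'' needed in this paper is to invoke that result. For the present exposition, my plan is simply to cite \cite{SZ}; what follows is a sketch of the strategy of \cite{SZ} so that the reader understands what is being imported.

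The general shape of a conservation-relation proof is as follows. One works with the two towers of Hermitian spaces $\{V_{p,q}\}$ whose signatures $(p,q)$ satisfy $p-q=t$ with $t$ of fixed parity $\delta$; these towers are parametrized by $t\in \mathbb Z$ with $t\equiv \delta \pmod 2$. One first establishes the standard ``going up'' (persistence) principle: if $\pi$ occurs in the theta correspondence with $\mathrm G(V_{p,q})$, then it occurs in the correspondence with every $\mathrm G(V_{p',q'})$ in the same tower with $p'\geq p$ and $q'\geq q$. Consequently $\mathrm n_t(\pi)$ is well defined for every $t$ once it is finite for some $t$, and by a Kudla-type argument using compatibility with parabolic induction, $\mathrm n_t(\pi)$ is finite for every $t$ of parity $\delta$.

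The core of the argument is to prove the matching upper and lower bounds
\[
  \mathrm n_{t_1}(\pi)+\mathrm n_{t_2}(\pi)\le 2\dim U+2 \qquad\text{and}\qquad \mathrm n_{t_1}(\pi)+\mathrm n_{t_2}(\pi)\ge 2\dim U+2
\]
for some pair $t_1\neq t_2$ of parity $\delta$. The upper bound is obtained by a see-saw dual pair argument of the form
\[
  \begin{array}{ccc}
    \mathrm G(V_{p,q})\times \mathrm G(V_{p',q'}) & & \mathrm G(U)\times \mathrm G(U) \\
    \cup & & \cup \\
    \mathrm G(V_{p+p',q+q'}) & & \mathrm G(U)
  \end{array}
\]
combined with an explicit calculation for a distinguished representation (a suitable one-dimensional character of $\mathrm G(U)$, playing the role of $\mathrm{sgn}$ in the orthogonal case), whose theta lift is controlled by degenerate principal series at a reducibility point. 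This pins down $\mathrm n_t$ exactly for the distinguished representation and gives an inequality for a general $\pi$ via Frobenius reciprocity.

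The hard part is the matching lower bound. In \cite{SZ} this is handled via wavefront-set / associated-variety arguments: one shows that too-small values of $\mathrm n_{t_1}(\pi)+\mathrm n_{t_2}(\pi)$ would force incompatible associated varieties for the Howe quotients in the two towers, contradicting their realizability as quotients of a fixed oscillator module. This is the step that would be the genuine obstacle if one tried to rederive the lemma from scratch; it requires the archimedean theory of associated cycles, multiplicity computations on nilpotent orbits, and precise information about the $K$-type content of the oscillator representation. Since all of this is established in \cite{SZ}, the plan for this paper is to take Lemma \ref{Conservation Relations for Unitary Dual Pairs} as a black box and proceed.
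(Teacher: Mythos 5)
Your proposal takes exactly the same route as the paper: Lemma \ref{Conservation Relations for Unitary Dual Pairs} is presented there purely as a recollection of \cite[Theorem 7.6]{SZ}, with no argument beyond the citation, so invoking that theorem is precisely what is required. (Your sketch of the internals of \cite{SZ} is immaterial to the logic here --- and in fact has the roles of the two bounds reversed, since in \cite{SZ} the lower bound is the elementary see-saw/distinguished-representation direction and the upper bound is the hard one, proved by a multiplicity argument rather than by associated varieties --- but this does not affect the validity of importing the result as a black box.)
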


In the algebraic setting, we need the following lemma.

\begin{lem}\label{Conservation Relations for Unitary Dual Pairs2}
Assume that $\delta=0$ and let $t$ be a nonzero even integer. Then
\begin{equation*}
  \mathrm{n}'_t(1_U)=2\dim U+|t|,
\end{equation*}
where $1_U\in \mathrm{Irr}^{\mathrm{gen}}(\mathrm G_\delta(U))$ denotes the representation with trivial action of $\mathrm{G}(U)\subset \mathrm G_\delta(U)$.
\end{lem}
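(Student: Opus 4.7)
The plan is to realize $\omega^{\mathrm{alg}}_{\mathrm G(U),\mathrm G(V_{p,q})}$ through a Fock (polynomial) model and then reduce the computation of $\mathrm n'_t(1_U)$ to a classical invariant-theoretic problem, in direct parallel with the treatment of $\mathrm{n}'(\mathrm{sgn})$ for orthogonal-symplectic pairs in Lemma~\ref{nsgn} (cf.\ \cite[Proposition~1.4]{AB1} and \cite[Proposition~2.10]{Pa3}).

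Concretely, I would choose an orthogonal decomposition $V_{p,q}=V^+\oplus V^-$ with $V^+$ positive-definite of dimension $p$ and $V^-$ negative-definite of dimension $q$. Combined with the fixed Cartan involutions on $\mathrm{End}_{\mathbb C}(U)$ and $\mathrm{End}_{\mathbb C}(V_{p,q})$, this produces a positive complex structure on the real symplectic space $U\otimes_{\mathbb R}V_{p,q}$ and an identification
\[
\omega^{\mathrm{alg}}_{\mathrm G(U),\mathrm G(V_{p,q})} \;\cong\; \mathcal P\bigl(\mathrm{Hom}_{\mathbb C}(V^+,U)\oplus\mathrm{Hom}_{\mathbb C}(U,V^-)\bigr)\otimes \chi_{p,q}
\]
as $(\mathfrak g(U),\mathrm K_\delta(U))$-modules, where $\chi_{p,q}$ is a genuine character of $\mathrm K_\delta(U)$ coming from the splitting of the metaplectic cover over $\mathrm G(U)$ in the unitary case; on the $\mathrm U(1)$-center of $\mathrm K(U)$ it agrees with $z\mapsto z^{(p-q)/2}$ tensored with the standard genuine character of $\mathrm G_0(U)$, as normalized in \cite[Section~1]{Pa1}.

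Given this model, the task of computing $\mathrm{Hom}_{\mathfrak g(U),\mathrm K_\delta(U)}(\omega^{\mathrm{alg}}_{\mathrm G(U),\mathrm G(V_{p,q})},1_U^{\mathrm{alg}})$ becomes, by Frobenius reciprocity, that of computing the $\mathrm G(U)$-invariants in $\mathcal P(\mathrm{Hom}_{\mathbb C}(V^+,U)\oplus\mathrm{Hom}_{\mathbb C}(U,V^-))$ that transform under the center of $\mathrm K(U)$ by $\chi_{p,q}^{-1}$ tensored with the central character of $1_U$. By the first fundamental theorem of invariant theory for $\mathrm{GL}_m(\mathbb C)$ (with $m=\dim U$) acting on $p$ covectors and $q$ vectors, these invariants are generated by the $pq$ contraction pairings, which carry trivial central character, together with two determinantal generators: $\det$ on the $V^+$-side (which is nontrivial only if $p\ge m$) and $\det^{-1}$ on the $V^-$-side (only if $q\ge m$). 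Matching central characters, the required twist $\chi_{p,q}^{-1}$ is realized iff $p\ge m+\max(t,0)$ and $q\ge m+\max(-t,0)$; the minimum of $p+q$ under these constraints is $2m+|t|$, yielding the claim.

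The main obstacle will be pinning down the character $\chi_{p,q}$ precisely and matching it against the chosen genuine-trivial normalization of $1_U$ on $\mathrm G_\delta(U)=\mathrm G(U)\times\{\pm 1\}$; this is the standard but somewhat delicate normalization of the unitary Weil representation. The hypotheses of the lemma enter exactly here: $\delta=0$ forces $p+q$ (and hence $t$) even, while $t\ne 0$ guarantees that a nontrivial $\det$-twist is actually required, so that the lower bound $p+q\ge 2m+|t|$ is strictly larger than the "generic" value $2m$.
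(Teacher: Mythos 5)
The paper's own proof of this lemma is a single sentence: it cites \cite[Lemma 3.1]{Pa2}. You have instead sketched a self-contained argument via the Fock model and classical invariant theory. This is almost certainly the circle of ideas behind Paul's lemma, so the strategy is sound, but as written the sketch leaves the two steps where all of the content resides unjustified, and one of them is genuinely delicate.

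First, the reduction from $\mathrm{Hom}_{\mathfrak g(U),\mathrm K_\delta(U)}(\omega^{\mathrm{alg}},1_U^{\mathrm{alg}})$ to $\mathrm{GL}_m(\mathbb C)$-relative invariants in the polynomial ring is not the naive statement your exposition suggests. The model you wrote down is the Fock model adapted to a maximal compact of $\mathrm{Sp}(W)$; on it, only the $\mathfrak k(U)$-part of $\mathfrak g(U)$ acts by Euler-type operators integrating to the obvious linear $\mathrm K(U)$-action. When $U$ is indefinite (so $\mathrm G(U)=\mathrm U(a,b)$ with $a,b>0$), the $\mathfrak p^\pm(U)$-parts act by second-order creation/annihilation operators, and $(\mathfrak g(U),\mathrm K_\delta(U))$-equivariance of a functional is an honest extra condition beyond $\mathrm K(U)$-covariance. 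Passing to $\mathrm{GL}_m$-semi-invariants as you do requires a separate argument (e.g.\ via joint harmonics / the ``see-saw'' with the compact form, which is what makes the answer depend only on $m=\dim U$ and not on the signature $(a,b)$). You cannot simply invoke the first fundamental theorem for $\mathrm{GL}_m$ acting on $U^p\oplus (U^*)^q$.

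Second, the decisive numerical constraint ``$p\ge m+\max(t,0)$ and $q\ge m+\max(-t,0)$'' is asserted but not derived, and it is exactly the point where sign conventions bite. The correct constraint comes from matching the genuine twist on the constant polynomial (which is $\det^{\pm(p-q)/2}$) against the $\det$-weights of the minors, and a sign error in the twist or in the $J$-chirality of the $V^\pm$-variables flips which side needs the determinantal generator; with the wrong sign one would instead deduce the lower bound $p\ge m$ (or $q\ge m$) with the other index free, leading to $\mathrm n'_t(1_U)=\max(|t|,\,2m-|t|)$, which is wrong. Checking this already for $m=1$ (where $\mathrm G(U)=\mathrm U(1)$ is abelian and the computation is purely a central-character count) shows that the answer $2+|t|$ requires the $\det^{-1/2}$ normalization and the constraint $q\ge 1$ (for $t>0$), not $p\ge 1$. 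So the ``standard but somewhat delicate normalization'' you defer is actually the whole lemma; without pinning it down you cannot distinguish the correct answer $2m+|t|$ from the wrong one. In short: right approach, but the two places you wave at are precisely where a proof must do work, which is presumably why the paper simply cites \cite[Lemma 3.1]{Pa2} rather than reproducing it.
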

\begin{proof}
See \cite[Lemma 3.1]{Pa2}.
\end{proof}

As in the last section,  Lemma \ref{Conservation Relations for Unitary Dual Pairs2}  implies the following lemma (\emph{cf}. the proof of \cite[Theorem 7.6]{SZ}).

\begin{lem}\label{lemalu}
\label{Algebraic Conservation Relations for Unitary Dual Pairs3}
Let $t_1$ and $t_2$ be two distinct integers, both of parity $\delta$. Then
\begin{equation}\label{consu2}
  \mathrm{n}'_{t_1}(\pi)+\mathrm{n}'_{t_2}(\pi)\geq 2\dim U+|t_1-t_2|.
\end{equation}
\end{lem}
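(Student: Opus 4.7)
The plan is to follow the template from Section \ref{orthogonal-symplectic dual pairs}, deriving this inequality from Lemma \ref{Conservation Relations for Unitary Dual Pairs2} by the same see-saw / tensor product argument (\emph{cf.}\ \cite[Section 2.1]{SZ}) that turned Lemma \ref{nsgn} into inequality \eqref{cons1}. The role played in the orthogonal-symplectic case by the sign character is now played by the trivial representation $1_U$, and the role played there by the pair $(\pi,\pi\otimes\mathrm{sgn})$ is now played by $(\pi,\pi^\vee)$, using that $\pi^{\mathrm{alg}}\otimes(\pi^{\mathrm{alg}})^\vee$ canonically surjects onto the trivial $(\mathfrak g(U),\mathrm K_\delta(U))$-module.

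For $i=1,2$, I would pick Hermitian spaces $V_i = V_{p_i,q_i}$ with $p_i+q_i=\mathrm n'_{t_i}(\pi)$ and $p_i-q_i=t_i$ realizing the algebraic first occurrences, and set $V := V_1\oplus V_2^-$, where $V_2^-$ is $V_2$ with the Hermitian form negated (hence of signature $(q_2,p_2)$). The orthogonal decomposition of real symplectic spaces $U\otimes_{\mathbb C} V = (U\otimes_{\mathbb C} V_1)\oplus(U\otimes_{\mathbb C} V_2^-)$ yields an isomorphism
\[
\omega_{\mathrm G(U),\mathrm G(V)}^{\mathrm{alg}} \;\cong\; \omega_{\mathrm G(U),\mathrm G(V_1)}^{\mathrm{alg}} \otimes \omega_{\mathrm G(U),\mathrm G(V_2^-)}^{\mathrm{alg}}
\]
of $(\mathfrak g(U)\oplus\mathfrak g(U),\mathrm K_\delta(U)\times\mathrm K_\delta(U))$-modules, and the MVW-type involution on the $\mathrm G(U)$-side (which changes a Hermitian space into its negative) identifies the second factor with the contragredient of $\omega_{\mathrm G(U),\mathrm G(V_2)}^{\mathrm{alg}}$, up to the normalization absorbed into the definition. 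Tensoring the two given first-occurrence maps and composing with the canonical pairing $\pi^{\mathrm{alg}}\otimes(\pi^{\mathrm{alg}})^\vee\twoheadrightarrow \mathbb C$, then restricting along the diagonal $(\mathfrak g(U),\mathrm K_\delta(U))\hookrightarrow(\mathfrak g(U)\oplus\mathfrak g(U),\mathrm K_\delta(U)\times\mathrm K_\delta(U))$, produces a non-zero element of
\[
\mathrm{Hom}_{\mathfrak g(U),\mathrm K_0(U)}(\omega_{\mathrm G(U),\mathrm G(V)}^{\mathrm{alg}},\,1_U^{\mathrm{alg}}).
\]

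Now $V$ has dimension $\mathrm n'_{t_1}(\pi)+\mathrm n'_{t_2}(\pi)$ and signature difference $t_1-t_2\neq 0$, which is even because $t_1$ and $t_2$ share the parity $\delta$; in particular the double cover associated to $V$ is of type $\delta=0$, so $1_U$ is indeed defined on it. The non-vanishing above therefore gives $\mathrm n'_{t_1-t_2}(1_U)\le \mathrm n'_{t_1}(\pi)+\mathrm n'_{t_2}(\pi)$, and combining this with the evaluation $\mathrm n'_{t_1-t_2}(1_U)=2\dim U+|t_1-t_2|$ furnished by Lemma \ref{Conservation Relations for Unitary Dual Pairs2} yields \eqref{consu2}.

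The main technical point to be verified is that the tensor product decomposition of the oscillator representation and the MVW-type identification, both standard in the smooth category, really descend compatibly to the $(\mathfrak g,K)$-finite level used in the definition of $\omega^{\mathrm{alg}}$; this is most cleanly seen inside the polynomial (Fock) realization of the oscillator module, where the decomposition of $V$ induces the tensor product at the level of polynomial vectors, and the canonical pairing is given by a manifest algebraic pairing of polynomial rings. Once this compatibility is in hand, the rest of the argument is a formal transcription of the orthogonal-symplectic proof.
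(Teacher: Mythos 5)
Your proposal is correct and essentially reconstructs what the paper compresses into its one-line citation of \cite[Theorem 7.6]{SZ} and the parallel argument of Section \ref{orthogonal-symplectic dual pairs}: tensor-decompose the algebraic oscillator module over $V_1\oplus V_2^-$, pair against $\pi^{\mathrm{alg}}\otimes(\pi^{\mathrm{alg}})^\vee\twoheadrightarrow \mathbb C$ via the conjugate/MVW model for $V_2^-$, and invoke Lemma \ref{Conservation Relations for Unitary Dual Pairs2} at $t_1-t_2$. The one technical point you flag (that the smooth tensor and duality identities descend to the Fock/$(\mathfrak g,K)$-finite level) is exactly what the paper implicitly leaves to \cite{SZ}, so this is the same approach.
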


We need the following lemma.

\begin{lem}\label{Algebraic Conservation Relations for Unitary Dual Pairs3}
Let $t$ be an integer with parity $\delta$. If $\mathrm{n}'_{t}(\pi)\leq \dim U$, then $\mathrm{n}_{t}(\pi)=\mathrm{n}'_{t}(\pi)$.

\end{lem}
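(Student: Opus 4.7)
The plan is to combine the smooth conservation identity of Lemma~\ref{Conservation Relations for Unitary Dual Pairs}, the algebraic conservation inequality of Lemma~\ref{lemalu}, and the elementary bound $\mathrm n'_{t'}(\pi)\leq \mathrm n_{t'}(\pi)$ valid for every $t'$ of parity $\delta$, in order to pin the given $t$ down to one of two distinguished indices produced by conservation.

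First I would choose, via Lemma~\ref{Conservation Relations for Unitary Dual Pairs}, distinct integers $t_1,t_2$ of parity $\delta$ with $\mathrm n_{t_1}(\pi)+\mathrm n_{t_2}(\pi)=2\dim U+2$. Combined with Lemma~\ref{lemalu} and the inequality $\mathrm n'_{t_i}(\pi)\leq \mathrm n_{t_i}(\pi)$, this gives
\[
2\dim U+|t_1-t_2|\ \leq\ \mathrm n'_{t_1}(\pi)+\mathrm n'_{t_2}(\pi)\ \leq\ \mathrm n_{t_1}(\pi)+\mathrm n_{t_2}(\pi)\ =\ 2\dim U+2.
\]
Since $t_1\neq t_2$ share the same parity $\delta$, $|t_1-t_2|\geq 2$, so all inequalities above are equalities: $|t_1-t_2|=2$ and $\mathrm n'_{t_i}(\pi)=\mathrm n_{t_i}(\pi)$ for $i=1,2$.

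Next I would argue that $t\in\{t_1,t_2\}$ by contradiction. If $t\notin\{t_1,t_2\}$, applying Lemma~\ref{lemalu} to each pair $(t,t_i)$ and invoking the hypothesis $\mathrm n'_t(\pi)\leq \dim U$ yields
\[
\mathrm n'_{t_i}(\pi)\ \geq\ 2\dim U+|t-t_i|-\mathrm n'_t(\pi)\ \geq\ \dim U+|t-t_i|,\qquad i=1,2.
\]
Summing and using $\mathrm n'_{t_1}(\pi)+\mathrm n'_{t_2}(\pi)=2\dim U+2$ from the previous step forces $|t-t_1|+|t-t_2|\leq 2$. But $t,t_1,t_2$ all have parity $\delta$ with $|t_1-t_2|=2$, so if $t$ were distinct from both $t_1$ and $t_2$ then $|t-t_i|\geq 1$ for each $i$, and equality $|t-t_i|=1$ in both would force $t=(t_1+t_2)/2$, which is an integer of parity $1-\delta$: a contradiction. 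Hence $t\in\{t_1,t_2\}$, and the coincidence $\mathrm n_t(\pi)=\mathrm n'_t(\pi)$ established in the first step finishes the proof.

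The only delicate step is the first one, where the smooth and algebraic conservation statements must be aligned precisely to simultaneously force $|t_1-t_2|=2$ and the coincidence of smooth and algebraic first occurrence at $t_1$ and $t_2$. Once this is in hand, the hypothesis $\mathrm n'_t(\pi)\leq \dim U$ is exactly what is needed to rule out any other value of $t$ via the integer-parity argument on $|t-t_1|+|t-t_2|$.
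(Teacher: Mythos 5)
Your proof is correct and follows essentially the same route as the paper's: first establish $\mathrm n'_{t_i}(\pi)=\mathrm n_{t_i}(\pi)$ for the two distinguished indices $t_1,t_2$ by squeezing between Lemma~\ref{Conservation Relations for Unitary Dual Pairs} and Lemma~\ref{lemalu}, then rule out $t\notin\{t_1,t_2\}$ by contradiction using the hypothesis $\mathrm n'_t(\pi)\le\dim U$. The only cosmetic difference is in the final contradiction: the paper simply notes $|t-t_i|\geq 2$ by parity and adds the two resulting inequalities to get $\mathrm n'_{t_1}(\pi)+\mathrm n'_{t_2}(\pi)\geq 2\dim U+4$, whereas you derive $|t-t_1|+|t-t_2|\leq 2$ and then eliminate the midpoint case by parity; you could shorten your last step by observing directly that $|t-t_i|$ is a nonzero even integer, hence $\geq 2$.
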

\begin{proof}
Let $t_1$ and $t_2$ be as in Lemma \ref{Conservation Relations for Unitary Dual Pairs}. Together with the obvious inequalities
\[
\mathrm{n}_{t_1}'(\pi)\leq \mathrm{n_{t_1}}(\pi)\quad\textrm{and}\quad  \mathrm{n}_{t_2}'(\pi)\leq \mathrm{n}_{t_2}(\pi ),
\]
\eqref{consu1} and \eqref{consu2} imply that
\begin{equation}\label{nt}
  \mathrm{n}'_{t_1}(\pi)= \mathrm{n_{t_1}}(\pi)\quad\textrm{and}\quad  \mathrm{n}'_{t_2}(\pi)= \mathrm{n}_{t_2}(\pi ).
\end{equation}
Thus it suffices to show that $t=t_1$ or $t_2$. Assume this is not true. Then Lemma \ref{lemalu} implies that
\begin{equation}\label{nt2}
   \mathrm{n}'_{t}(\pi)+\mathrm{n}'_{t_i}(\pi)\geq 2\dim U+2,\quad i=1,2.
\end{equation}
Together with the inequality  $\mathrm{n}'_{t}(\pi)\leq \dim U$, \eqref{nt2} implies that
\[
  \mathrm{n}'_{t_1}(\pi)+\mathrm{n}'_{t_2}(\pi)\geq 2\dim U+4.
\]
This contradicts \eqref{consu1} and \eqref{nt}.
\end{proof}

Finally,  we come to the proof of Proposition \ref{typeun}.  Assume that the dual pair $(G,G')=(\mathrm{G}(U), \mathrm{G}(V))$, where $V$ is a complex Hermitian space. Without loss of generality, assume that $\dim V\leq \dim U$.

Suppose that  $V=V_{p,q}$ and $p+q$ has parity $\delta$. Let $\pi\in \mathscr{R}^{\mathrm{alg}}(\widetilde{G},\omega_{G,G'})$. Then
\[
  \mathrm{n}'_{p-q}(\pi)\leq p+q\leq \dim U.
\]
Thus by Lemma \ref{Algebraic Conservation Relations for Unitary Dual Pairs3},
\[
 \mathrm{n}_{p-q}(\pi)= \mathrm{n}'_{p-q}(\pi)\leq p+q.
 \]
This is equivalent to saying that $\pi\in \mathscr{R}^{\infty}(\widetilde{G},\omega_{G,G'})$. Therefore
\[
\mathscr{R}^{\mathrm{alg}}(\widetilde{G},\omega_{G,G'})\subset  \mathscr{R}^{\infty}(\widetilde{G},\omega_{G,G'})
\]
 and Proposition \ref{typeun} follows.

\end{document}